%% file: manuscript.tex
\documentclass[11pt]{article}
\usepackage[margin=1in]{geometry}
\usepackage{amsmath,amssymb,amsthm,mathtools}
\usepackage{booktabs}
\usepackage{enumitem}
\usepackage[hidelinks]{hyperref}

\hypersetup{
  pdftitle={A Sharp Four-Layer Theorem for Integer-Occupied Slices of a Planar Disk-Slab},
  pdfauthor={Xinyue Liu},
  pdfsubject={Planar lattice geometry and integer programming},
  pdfkeywords={lattice enumeration, arithmetic width, Babai nearest plane, planar convex bodies, Bernstein certificates}
}

\newtheorem{theorem}{Theorem}[section]
\newtheorem{lemma}[theorem]{Lemma}
\newtheorem{proposition}[theorem]{Proposition}
\newtheorem{corollary}[theorem]{Corollary}
\theoremstyle{definition}
\newtheorem{definition}[theorem]{Definition}
\theoremstyle{remark}
\newtheorem{remark}[theorem]{Remark}

\newcommand{\Z}{\mathbb Z}

\newcommand{\dist}{\operatorname{dist}}

\title{A Sharp Four-Layer Theorem for Integer-Occupied Slices of a
Planar Disk--Slab}
\author{Xinyue Liu\thanks{Corresponding author: \texttt{liuxinyue743@gmail.com}}\\
\small Institute of Nuclear and New Energy Technology, Tsinghua University,\\
\small Beijing 100084, China}
\date{August 2026}

\begin{document}
\maketitle

\begin{abstract}
Let a rational positive-definite quadratic sublevel set on a full-rank affine
lattice coset be intersected with a closed slab.  A deterministic
two-dimensional $\delta=3/4$ reduction selects a primitive-dual integer
functional and an inner ellipse.  In the branch where the corresponding
Babai point lies outside that ellipse, the lattice points in the disk--slab
occupy at most four integer levels of the selected functional.  An explicit
rational instance attains four, so the bound is sharp.  A strict diameter
estimate reduces any counterexample to three blocks of five consecutive
levels, and a nearest-integer inequality excludes the central block.
Symmetry and cell localization reduce the two side blocks to eight modes:
four follow from a shared-cap inequality, and four from an exact
four-variable Bernstein certificate.  The certificate contains $17{,}640$
positive coefficients, with minimum $625/2048$.
\end{abstract}

\medskip
\noindent\textbf{Keywords.} Lattice enumeration; arithmetic width; Babai
nearest plane; planar convex bodies; Bernstein certificates.

\noindent\textbf{2020 Mathematics Subject Classification.} 11H06, 90C10.

\section{Introduction}

For a convex body $K$ and an integer functional $w$, directional arithmetic
width counts the values attained by $w$ on $K\cap\mathbb Z^d$
\cite{DeLoeraMarstersONeill2025}.  Deza and Onn introduced a related
vertex-based layer number \cite{DezaOnn1995}.  These notions differ from
classical lattice width, which minimizes the length of a real projection over
primitive directions.  Planar flatness results for hollow bodies and for
bodies with prescribed interior lattice points belong to that latter theory
\cite{Hurkens1990,AverkovCodenottiFreyerHuang2026,CodenottiHallHofscheier2021,KannanLovasz1988}.

Here the direction is not optimized.  It is selected by a deterministic
reduction of the quadratic form.  Moreover, failure of the associated Babai
point does not imply that the disk--slab is lattice-free; many feasible
lattice points may remain.  Nearest-plane algorithms, lattice enumeration,
and fixed-dimensional integer programming provide the algorithmic setting
\cite{Babai1986,Lenstra1983,FinckePohst1985,SchnorrEuchner1994,AgrellEtAl2002}.
Those general methods do not determine how many levels of this selected
functional can be occupied.

For the strict-failure branch defined below, the main result is $L_Z^*=4$.
A symmetric estimate first gives seven safe candidate levels, and a strict
diameter bound limits the real body to five consecutive levels.  The theorem
excludes simultaneous lattice occupation of all five.  A rational example
occupies four levels while the real body intersects five, showing that the
integer and continuous counts are distinct.

The proof treats the three possible five-level blocks separately.  A
nearest-integer inequality excludes the central block.  Layer reversal
identifies the two side blocks, and cell localization reduces the remaining
case to eight modes.  Four modes violate a shared-cap inequality directly;
the other four are excluded by an exact Bernstein-coefficient certificate.

\section{Setting and notation}
\label{sec:contract}

Let
\begin{equation}
 F(\mathbf q)=\mathbf q^TM\mathbf q+2r_0^T\mathbf q+c_0,
 \qquad M=M^T\in\mathbb Q^{2\times2},\quad M\succ0,
 \label{eq:original-quadratic}
\end{equation}
and let the full-rank affine lattice coset be
\begin{equation}
 \mathbf q=q_0+Kz,\qquad q_0\in\mathbb Z^2,\quad
 K\in\mathbb Z^{2\times2},\quad \det K\ne0,\quad z\in\mathbb Z^2.
 \label{eq:affine-coset}
\end{equation}
Let $u_1=(1,0)^T$ and $u_2=(0,1)^T$ be the standard coordinate vectors.
For integers $A_0\le B_0$, the original
real disk--slab is
\begin{equation}
 \mathcal C^{\mathbb R}_{A_0,B_0}=
 \{\mathbf q\in\mathbb R^2:F(\mathbf q)\le0,
 A_0\le u_1^T\mathbf q\le B_0\}.
 \label{eq:original-body}
\end{equation}
All inequalities in this paper are closed.

Complete the square by setting
\begin{equation}
 q_c=-M^{-1}r_0,\qquad R=r_0^TM^{-1}r_0-c_0,\qquad Q=M/R.
 \label{eq:completion}
\end{equation}
Thus $F(\mathbf q)=(\mathbf q-q_c)^TM(\mathbf q-q_c)-R$.  The discussion
below concerns the nondegenerate branch $R>0$.  Put
\[
 \sigma^2=u_1^TQ^{-1}u_1,\qquad
 c_1=u_1^Tq_c.
\]
The real projection is $[c_1-\sigma,c_1+\sigma]$.  Before constructing an
ellipse, close the slab exactly on integer coordinates:
\begin{equation}
 A=\max\{A_0,\lceil c_1-\sigma\rceil\},\qquad
 B=\min\{B_0,\lfloor c_1+\sigma\rfloor\}.
 \label{eq:integer-clamp}
\end{equation}
The theorem concerns the rank-two case $A<B$; empty, singleton, and
fixed-column cases are separate one-dimensional branches.

Set
\begin{equation}
 p_s=\frac{A+B}{2}-c_1,\qquad
 \sigma^2h_s^2=\frac{(B-A)^2}{4},\qquad
 s_s^2=\frac{\sigma^2-p_s^2}{4\sigma^2}.
 \label{eq:rounding-scalars}
\end{equation}
The rational centre and metric
\begin{align}
 q_E&=q_c+\frac{p_s}{\sigma^2}Q^{-1}u_1,\label{eq:qE}\\
 H_E&=\frac{Q}{s_s^2}+
 \left(\frac1{\sigma^2h_s^2}-\frac1{\sigma^2s_s^2}\right)
 u_1u_1^T
 \label{eq:HE}
\end{align}
define the inner ellipse.  To see the geometry directly, use disk coordinates
\[
 x=\frac{u_1^T(\mathbf q-q_c)}{\sigma},\qquad
 y^2=(\mathbf q-q_c)^T
 \left(Q-\frac{u_1u_1^T}{\sigma^2}\right)
 (\mathbf q-q_c).
\]
Then the full quadratic body is $x^2+y^2\le1$ and the clamped slab is
$a\le x\le b$, where $a=(A-c_1)/\sigma$ and $b=(B-c_1)/\sigma$.
With $m_s=(a+b)/2$, $h_s=(b-a)/2$, and
$X=(x-m_s)/h_s$, $Y=y/s_s$, the ellipse
$X^2+Y^2\le1$ is exactly
$(\mathbf q-q_E)^TH_E(\mathbf q-q_E)\le1$.  The endpoint conditions imply
\[
 1-(m_s+h_sX)^2\ge\frac{(1-m_s^2)(1-X^2)}4
\]
and $h_s\le1-|m_s|$.  Conversely,
$(1-x^2)/(1-m_s^2)\le2$ on the slab.  These two inequalities give the
factor-three rounding
\begin{equation}
 \{\mathbf q:(\mathbf q-q_E)^TH_E(\mathbf q-q_E)\le1\}
 \subseteq\mathcal C^{\mathbb R}_{A,B}
 \subseteq
 \{\mathbf q:(\mathbf q-q_E)^TH_E(\mathbf q-q_E)\le9\}.
 \label{eq:three-rounding}
\end{equation}

Pull the metric back through \eqref{eq:affine-coset}:
\begin{equation}
 z_E=K^{-1}(q_E-q_0),\qquad H_z=K^TH_EK.
 \label{eq:pullback}
\end{equation}
Let $D$ be the least positive common multiple of the canonical denominators
of the three independent entries of $H_z$, and put $G=DH_z$.  Exact
deterministic $\delta=3/4$ reduction returns $U\in SL(2,\mathbb Z)$ with
\begin{equation}
 U^TGU=\begin{pmatrix}a_G&b_G\\b_G&c_G\end{pmatrix},\qquad
 \mu=\frac{b_G}{a_G},\qquad
 \eta=c_G-\frac{b_G^2}{a_G},
 \label{eq:reduced-gram}
\end{equation}
where $-1/2\le\mu<1/2$, $c_G\ge3a_G/4$, and
$\eta\ge a_G/2$.  The half-open endpoint is fixed by using nearest-integer
rounding with every half tie sent to the larger integer.

Write $\xi=U^{-1}z_E$ and define
\begin{align*}
 N_2&=\operatorname{nint}(\xi_2),& e_2&=\xi_2-N_2,\\
 N_1&=\operatorname{nint}(\xi_1+\mu e_2),&
 e_1&=\xi_1+\mu e_2-N_1.
\end{align*}
The deterministic Babai point is $p_B=U(N_1,N_2)^T$.  Centre the reduced
lattice coordinates by
\begin{equation}
 \binom tj=U^{-1}z-\binom{N_1}{N_2},\qquad
 \alpha=\frac{a_G}{D},\qquad \beta=\frac\eta D.
 \label{eq:centered-reduced}
\end{equation}
In these coordinates the inner-ellipse gauge is
\begin{equation}
 \mathcal H(t,j)=\alpha(t-e_1+\mu j)^2+\beta(j-e_2)^2,
 \label{eq:gauge}
\end{equation}
where
\begin{equation}
 \alpha>0,\qquad -\tfrac12\le\mu<\tfrac12,\qquad
 \beta\ge\alpha(\tfrac34-\mu^2),\qquad
 e_1,e_2\in[-\tfrac12,\tfrac12).
 \label{eq:reduced-contract}
\end{equation}
The Babai point is now $(0,0)$.  The theorem concerns the strict complement
of the closed inner ellipse,
\begin{equation}
 E_B:=\alpha e_1^2+\beta e_2^2>1.
 \label{eq:babai-fail}
\end{equation}
This condition is exactly
$(p_B-z_E)^TG(p_B-z_E)>D$.  Since
$E_B\le(\alpha+\beta)/4$ and $\beta\ge\alpha/2$, it implies
\begin{equation}
 \beta>\tfrac43
 \label{eq:beta-floor}
\end{equation}
and \eqref{eq:three-rounding} gives, for every occupied lattice point,
\begin{equation}
 \mathcal H(t,j)\le9.
 \label{eq:outer3}
\end{equation}

The reduction also identifies the counted functional.  Let
\begin{equation}
 w=U^{-T}u_2.
 \label{eq:primitive-dual}
\end{equation}
It is a primitive integer vector, and the centred level of $z$ is
$j=w^Tz-N_2$.  Equivalently, on the original affine coset,
\[
 j=w^TK^{-1}(\mathbf q-q_0)-N_2.
\]

Finally, the original body has structure that cannot be replaced by the outer
ellipse alone.  Reflect the normalized first axis when necessary and put
$m=|m_s|$, $h=h_s$, so that $x=m+hX$.  Since
$s_s^2=(1-m^2)/4$, the disk inequality $x^2+y^2\le1$ becomes
\[
 Y^2\le4\frac{1-(m+hX)^2}{1-m^2}.
\]
Define
\[
 \lambda=\frac{h}{1-m},\qquad \rho=\frac{h}{1+m}.
\]
The right-hand side factors without approximation:
\begin{align*}
4\frac{1-(m+hX)^2}{1-m^2}
&=4\frac{(1-m-hX)(1+m+hX)}{(1-m)(1+m)}\\
&=4(1-\lambda X)(1+\rho X)\\
&=4\{1-(\lambda-\rho)X-\lambda\rho X^2\}.
\end{align*}
The slab endpoint inequalities give $0<\rho\le\lambda\le1$.
Thus every point of one instance is measured in the same normalized axes and
with the same pair $(\lambda,\rho)$.
Let
\[
 \mathcal C_{\rm red}=\left\{(t,j)\in\mathbb R^2:
 q_0+KU\binom{N_1+t}{N_2+j}\in\mathcal C^{\mathbb R}_{A,B}\right\},
\]
and let $\widetilde{\mathcal C}$ be its image in the common inner-ellipse
coordinates $(X,Y)$.  Every point of $\widetilde{\mathcal C}$ obeys
\begin{equation}
|X|\le1,\qquad
 Y^2\le4\{1-(\lambda-\rho)X-\lambda\rho X^2\},
 \qquad0<\rho\le\lambda\le1.
 \label{eq:common-cap}
\end{equation}
The axes and the pair $(\lambda,\rho)$ are shared by all points of one
instance.  More precisely, for a displacement $(\Delta t,\Delta j)$ define
\[
 \|(\Delta t,\Delta j)\|_H^2
 =\alpha(\Delta t+\mu\Delta j)^2+\beta(\Delta j)^2.
\]
The affine map to $(X,Y)$ has orthogonal linear part with respect to this
inner-ellipse metric, so for any $v_1,v_2\in\mathcal C_{\rm red}$ and their
images $(X_1,Y_1),(X_2,Y_2)$,
\begin{equation}
 \|v_1-v_2\|_H^2=(X_1-X_2)^2+(Y_1-Y_2)^2.
 \label{eq:isometry}
\end{equation}

Let
\[
 I=(M,r_0,c_0,q_0,K,A_0,B_0)
\]
denote an admissible instance together with the deterministic reduction and
tie-breaking conventions fixed above.  The supremum below ranges over all
such instances in the nondegenerate strict-failure branch.

\begin{definition}
For data \eqref{eq:original-quadratic}--\eqref{eq:original-body} satisfying
the nondegenerate strict-failure assumptions above, define the occupied
integer levels of the reduction-induced primitive-dual affine functional by
\[
 J_Z(I)=\left\{j\in\Z:\begin{array}{l}
 \exists t\in\Z\text{ such that }\mathbf q=
 q_0+KU\binom{N_1+t}{N_2+j},\\[-2pt]
 F(\mathbf q)\le0,\quad A\le u_1^T\mathbf q\le B
 \end{array}\right\}.
\]
Set $L_Z^*=\sup_I|J_Z(I)|$.
\end{definition}

The set $J_Z(I)$ records only levels containing lattice points.  The
seven-element safe list and the levels intersecting the real body are
auxiliary supersets and are counted separately.

\section{From five occupied levels to three configurations}
\label{sec:three-configurations}

The following continuous estimate is included because it is used twice in
the discrete argument.

\begin{lemma}[Strict disk--slab diameter]
\label{lem:disk-slab-diameter}
In the inner-ellipse coordinates of \eqref{eq:common-cap}, the Euclidean
diameter of $\widetilde{\mathcal C}$ is strictly smaller than $4\sqrt2$.
\end{lemma}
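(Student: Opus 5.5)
The plan is to work entirely in the $(X,Y)$ coordinates, where by \eqref{eq:common-cap} the body $\widetilde{\mathcal C}$ sits inside
\[
 \Omega=\{(X,Y):|X|\le1,\ Y^2\le 4g(X)\},\qquad g(X)=(1-\lambda X)(1+\rho X),
\]
with $0<\rho\le\lambda\le1$. It suffices to show that $\Omega$ has diameter strictly below $4\sqrt2$. The set $\Omega$ is compact and $g$ is continuous, so the squared distance attains its maximum on $\Omega\times\Omega$. Strictness therefore reduces to showing that no pair of points of $\Omega$ is at squared distance exactly $32$.

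\textbf{Step 1 (decoupling $Y$).} For $(X_i,Y_i)\in\Omega$ I use
\[
 (Y_1-Y_2)^2\le(|Y_1|+|Y_2|)^2\le 2(Y_1^2+Y_2^2)\le 8\{g(X_1)+g(X_2)\}.
\]
This reduces the claim to a two-variable polynomial inequality on the square. Write $g(X)=1+f(X)$ with $f(X)=(\rho-\lambda)X-\lambda\rho X^2$. Then the target becomes
\[
 \Phi(X_1,X_2):=(X_1-X_2)^2+8f(X_1)+8f(X_2)<16\qquad\text{on }[-1,1]^2 .
\]
The reference case $\lambda=\rho=1$ (the full disk with a centred slab) is a good guide. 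There $f=-X^2$, and $\Phi\le16$ follows from $(X_1-X_2)^2\le2X_1^2+2X_2^2$. Equality occurs only on the line $X_1=-X_2$, which must then be ruled out separately.

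\textbf{Step 2 (maximizing $\Phi$).} $\Phi$ is a quadratic with Hessian $\begin{pmatrix}2-16\lambda\rho&-2\\-2&2-16\lambda\rho\end{pmatrix}$, whose eigenvalues are $-16\lambda\rho$ and $4-16\lambda\rho$. I split into two cases.

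\emph{Case $\lambda\rho\ge1/4$.} Here $\Phi$ is concave. Its stationary point lies on the diagonal by the symmetry $X_1\leftrightarrow X_2$, so the maximum is at most $16\max f$. I then show $\max_{[-1,1]}f<1$. If the vertex $X^*=-(\lambda-\rho)/(2\lambda\rho)$ lies in $[-1,1]$, then $\max f=(\lambda-\rho)^2/(4\lambda\rho)\le\lambda\rho\le1$. Equality would force $\lambda=\rho=1$, but then the maximum is $0$. If the vertex is outside, the maximum is at an endpoint, where $f(-1)=\lambda-\rho-\lambda\rho<1$ and $f(1)\le0$.

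\emph{Case $\lambda\rho<1/4$.} Here the Hessian is indefinite, so the maximum lies on the boundary of the square. By symmetry I may take $X_1=\pm1$, which leaves a one-variable quadratic in $X_2$ that can be checked directly. The worst configuration I expect is $X_1=-1$, $X_2=1$, giving
\[
 4+8\{(\lambda-\rho-\lambda\rho)+(\rho-\lambda-\lambda\rho)\}=4-16\lambda\rho<16 .
\]
The competing endpoint configuration $X_1=X_2=-1$ gives $16f(-1)<16$ because $\rho>0$.

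\textbf{Main obstacle.} I expect the indefinite boundary case of Step~2 to be the main obstacle. The issue is that Step~1 is lossy: the bound $|Y_1|+|Y_2|\le\sqrt{2(Y_1^2+Y_2^2)}$ is tight only when $|Y_1|=|Y_2|$. So the boundary analysis must not lose more than this margin. If the reduction to the square fails in some subrange of $(\lambda,\rho)$, I will instead keep the exact quantity
\[
 (X_1-X_2)^2+4\bigl(\sqrt{g(X_1)}+\sqrt{g(X_2)}\bigr)^2 .
\]
I would then use the concavity of $\sqrt g$ (as $g$ is a concave quadratic) together with a support-line argument in the direction of the maximizing chord. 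In either route, strictness comes from $\rho>0$: it excludes both the degenerate equality $f(-1)=1$ at $(\lambda,\rho)=(1,0)$ and the equality line of the symmetric disk case.
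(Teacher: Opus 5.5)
Your Step 1 is exactly the paper's reduction: the paper also passes through $(|Y_1|+|Y_2|)^2$ and Cauchy--Schwarz. So $\Phi<16$ on $[-1,1]^2$ is precisely the inequality the paper proves. It holds for every $0<\rho\le\lambda\le1$, and the fallback to the exact square-root expression is never needed.

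Your case $\lambda\rho\ge1/4$ is correct once the symmetry step is phrased as $\Phi(X_1,X_2)\le\Phi(m,m)=16f(m)$ with $m=(X_1+X_2)/2\in[-1,1]$, using concavity plus symmetry. This matters because the unconstrained stationary point need not lie in the square.

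The gap is in the case $\lambda\rho<1/4$. You reduce to the edges correctly, but then you only evaluate corners, and that is not enough. Write $D=\lambda-\rho$ and $P=\lambda\rho$. On the edge $X_1=-1$,
\[
 \Phi(-1,X_2)=(1-8P)X_2^2+(2-8D)X_2+1+8D-8P .
\]
This is concave for $1/8<P<1/4$, so its maximum can lie strictly inside the edge. You need the vertex computation. Suppose $X_2^*=(1-4D)/(8P-1)\in[-1,1]$. Then $(1-4D)^2\le(8P-1)^2$, so the vertex value $1+8D-8P+(1-4D)^2/(8P-1)$ is at most $8D$. Also $|1-4D|\le8P-1<1$ forces $D<1/2$, so this value is below $4$. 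On the edge $X_1=+1$ the vertex $-(1+4D)/(8P-1)$ lies outside $[-1,1]$, so corners do suffice there. With this added, your argument is complete.

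Separately, your reference case is misdescribed. For $\lambda=\rho=1$ one has $\Phi=(X_1-X_2)^2-8(X_1^2+X_2^2)\le0$, so there is no equality line. The only regime approaching $16$ is $(\lambda,\rho)\to(1,0)$ at $X_1=X_2=-1$, where $16f(-1)\to16$. Consequently the worst corner is $(-1,-1)$, not $(-1,1)$, and the sole source of strictness is $\lambda-\rho<1$.

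For comparison, the paper finishes without any Hessian split. Put $X_{1,2}=u\pm v$ with $|u|+|v|\le1$. Then
\[
 \Phi=4v^2-16(\lambda-\rho)u-16\lambda\rho(u^2+v^2).
\]
Drop the last term. For $u\ge0$ this gives $\Phi\le4$. For $u<0$ it gives $\Phi<4v^2+16|u|\le4v^2+16(1-|v|)\le16$, strictly because $\lambda-\rho<1$. This is uniform in $(\lambda,\rho)$ and needs no vertex analysis.

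The same coordinates expose your split, since $\Phi=(4-16\lambda\rho)v^2+16f(u)$. The $v^2$ coefficient changes sign at $\lambda\rho=1/4$, and your Case 1 is the statement $\Phi\le16f(u)$ when it is nonpositive. Your route yields sharper regime-by-regime information, such as $\Phi\le16\max f$ in the concave regime. The lemma needs only the crude two-line bound, which also closes your Case 2 directly.
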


\begin{proof}
Write
\[
 g(X)^2=1-(\lambda-\rho)X-\lambda\rho X^2.
\]
For two points, set $X_1=u+v$ and $X_2=u-v$; then
$|u|+|v|\le1$.  Their squared distance is at most
\[
 4v^2+4\{g(X_1)+g(X_2)\}^2.
\]
By Cauchy--Schwarz and direct expansion,
\[
 \{g(X_1)+g(X_2)\}^2
 \le4-4(\lambda-\rho)u-4\lambda\rho(u^2+v^2).
\]
If $u\ge0$, the right-hand side is at most $4$, and the squared distance is
at most $20<32$.  If $u<0$, then $\lambda-\rho<1$ because
$0<\rho\le\lambda\le1$; hence the displayed bound is strictly smaller than
$4+4(-u)\le8-4|v|\le8-v^2$.  The squared distance is therefore strictly
smaller than $4v^2+4(8-v^2)=32$ for every pair of points.  The original
closed quadratic--slab body is bounded, hence compact; its image
$\widetilde{\mathcal C}$ is
compact as well.  The continuous squared-distance function on
$\widetilde{\mathcal C}\times\widetilde{\mathcal C}$
therefore attains its maximum at a pair to which the strict estimate applies.
Consequently,
\[
 \max_{P_1,P_2\in\widetilde{\mathcal C}}
 \|P_1-P_2\|_2^2<32,
\]
and therefore
$\operatorname{diam}(\widetilde{\mathcal C})<4\sqrt2$.
\end{proof}

\begin{proposition}[Five-level trichotomy]
\label{prop:five-level-trichotomy}
Every admissible instance satisfies $|J_Z(I)|\le5$.  If
$|J_Z(I)|\ge5$, then equality holds and
\begin{equation}
J_Z(I)\in\bigl\{
 \{-3,-2,-1,0,1\},
 \{-2,-1,0,1,2\},
 \{-1,0,1,2,3\}
\bigr\}.
\label{eq:three-configs}
\end{equation}
\end{proposition}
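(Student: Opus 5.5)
Two facts drive the argument. The outer bound \eqref{eq:outer3} confines every occupied level to a fixed window of seven integers. The strict diameter estimate of Lemma~\ref{lem:disk-slab-diameter} then limits how far apart two occupied levels can be.

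\textbf{The seven-level window.} From \eqref{eq:outer3} and $\mathcal H(t,j)\ge\beta(j-e_2)^2$ we get $\beta(j-e_2)^2\le9$. With \eqref{eq:beta-floor} this gives $(j-e_2)^2<27/4$, so $|j-e_2|<3\sqrt3/2\approx2.598$. Since $e_2\in[-\tfrac12,\tfrac12)$, this forces $j\in\{-3,\dots,3\}$. This is the seven-element safe list.

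\textbf{The span bound.} Take two occupied levels $j_1<j_2$, realised by lattice points $v_1,v_2\in\mathcal C_{\rm red}$. By the isometry \eqref{eq:isometry} and Lemma~\ref{lem:disk-slab-diameter},
\[
\|v_1-v_2\|_H^2<32.
\]
On the other hand, $\|v_1-v_2\|_H^2\ge\beta(j_2-j_1)^2>\tfrac43(j_2-j_1)^2$. This yields $(j_2-j_1)^2<24$, so $j_2-j_1\le4$. Hence all occupied levels lie in some block of five consecutive integers, and $|J_Z(I)|\le5$.

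\textbf{The three configurations.} If $|J_Z(I)|\ge5$, then equality holds and $J_Z(I)$ is exactly five consecutive integers inside $\{-3,\dots,3\}$. There are precisely three such blocks, which gives \eqref{eq:three-configs}.

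\textbf{Main risk.} The step I expect to be delicate is the span bound: the crude constants must actually give $j_2-j_1<5$. They do, since $\tfrac43\cdot25=\tfrac{100}{3}>32$. The margin is thin, and it relies on both the strictness of $\beta>4/3$ and the strictness of the diameter bound.

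If that margin had failed, I would sharpen the estimate by using the cap structure \eqref{eq:common-cap} directly rather than the generic diameter. Lemma~\ref{lem:disk-slab-diameter} was evidently designed so that this is not needed.
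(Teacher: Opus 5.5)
Your proof is correct and follows the paper's argument essentially step for step. The bound \eqref{eq:outer3} together with $\beta>4/3$ gives the seven-level window, and the isometry plus Lemma~\ref{lem:disk-slab-diameter} give $\beta(j_+-j_-)^2<32$, which forces a span of at most $4$ and hence one of the three blocks. One small correction to your closing remark: the margin does not depend on either strictness. The non-strict bounds $\beta\ge4/3$ and $\operatorname{diam}\le4\sqrt2$ already give $(j_2-j_1)^2\le24<25$.
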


\begin{proof}
For an occupied point $(t,j)$, \eqref{eq:outer3} gives
\[
 \beta(j-e_2)^2\le9,
 \qquad
 |j-e_2|\le\frac3{\sqrt\beta}
 <\frac{3\sqrt3}{2}<3.
\]
Since $e_2\in[-1/2,1/2)$, every occupied level lies in
$\{-3,-2,-1,0,1,2,3\}$; in particular, $J_Z(I)$ is finite.

If $J_Z(I)$ is nonempty, let $j_-$ and $j_+$ be its smallest and largest
elements and choose occupied points $v_-=(t_-,j_-)$ and
$v_+=(t_+,j_+)$.  By \eqref{eq:isometry},
Lemma~\ref{lem:disk-slab-diameter}, and $\beta>4/3$,
\[
 \sqrt\beta\,(j_+-j_-)
 \le\|v_+-v_-\|_H
 <4\sqrt2.
\]
Hence
\[
 j_+-j_-<\frac{4\sqrt2}{\sqrt\beta}<2\sqrt6<5.
\]
The left side is an integer, so $j_+-j_-\le4$ and
\[
 |J_Z(I)|\le j_+-j_-+1\le5.
\]
The same conclusion is trivial when $J_Z(I)$ is empty.

If at least five levels are occupied, the preceding inequalities force
$|J_Z(I)|=5$ and $j_+-j_-=4$.  All five integers between the two endpoints
must then be occupied.  A block of five consecutive integers inside the
seven-element safe list can start only at $-3$, $-2$, or $-1$, which gives
exactly the three blocks in \eqref{eq:three-configs}.
\end{proof}

Layer reversal exchanges the two side configurations.  It remains to exclude
the central configuration and one normalized side configuration.

\section{Central five-level exclusion}
\label{sec:central-exclusion}

\begin{lemma}[Nearest-integer inequality]
\label{lem:nearest-integer}
For $x\in[-1/2,1/2]$ and $y\in[-1,1]$,
\begin{equation}
 \dist(x-y,\Z)^2+3\ge9x^2+y^2.
 \label{eq:nearest-lemma}
\end{equation}
The inequality includes all nearest-integer tie points.
\end{lemma}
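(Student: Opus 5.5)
The plan is to remove the distance function by a finite case split on the nearest integer to $x-y$, and then check a polynomial inequality in each case. Since $|x|\le 1/2$ and $|y|\le1$, we have $x-y\in[-3/2,3/2]$. Hence $\dist(x-y,\Z)=|x-y-n|$ for some $n\in\{-2,-1,0,1,2\}$ with $|x-y-n|\le 1/2$. At a tie both candidate integers give the same value of $|x-y-n|$, so it suffices to prove
\[
 f_n(x,y):=(x-y-n)^2+3-9x^2-y^2\ge0
\]
for every admissible $n$ on the region where $|x-y-n|\le1/2$. This covers all tie points automatically. Expanding gives
\[
 f_n(x,y)=3+n^2-8x^2-2xy-2nx+2ny.
\]

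\emph{Case $n=0$.} Here $f_0=3-8x^2-2xy$. We have $8x^2\le2$ and $|2xy|\le1$, so $f_0\ge0$ on the whole rectangle, with no need for the constraint $|x-y|\le1/2$. Equality occurs at $(x,y)=\pm(1/2,1)$. These are exactly tie points, since there $x-y=\mp1/2$, so the inequality is sharp.

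\emph{Case $n=1$.} Here $f_1=4-8x^2-2x+2y(1-x)$. Since $1-x>0$, $f_1$ is increasing in $y$. The constraint $x-y-1\in[-1/2,1/2]$ gives $y\ge x-3/2$, and $x-3/2\le-1$, so the smallest admissible $y$ is $-1$. At $y=-1$ the expression collapses to $f_1=2-8x^2\ge0$, because $|x|\le1/2$.

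\emph{Case $n=-1$.} This follows from the case $n=1$ by the symmetry $(x,y)\mapsto(-x,-y)$. That map preserves the rectangle and $\dist(\cdot,\Z)$, and it sends $f_{-1}$ to $f_1$.

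\emph{Cases $n=\pm2$.} These occur only when $|x-y|=3/2$, i.e.\ at $(x,y)=\pm(1/2,-1)$. At these points $\dist(x-y,\Z)=1/2$ is also realized by $n=\pm1$, so they are already covered by the previous cases; checking directly, $f_{\pm2}=2-8x^2=0$ there.

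There is no real obstacle beyond this bookkeeping. The only point needing care is that the reduction to the region $|x-y-n|\le1/2$ is legitimate. This holds because the left side of \eqref{eq:nearest-lemma} equals $\min_n (x-y-n)^2+3$, and the minimizing $n$ lies in that region. A second point is that the monotonicity in $y$ in the $n=1$ case uses $x\le1/2<1$.
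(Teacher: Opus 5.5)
Your proof is correct and follows essentially the same route as the paper. Both split on the nearest integer in $\{-1,0,1\}$, reduce the $-1$ cell to the $+1$ cell by the sign reversal $(x,y)\mapsto(-x,-y)$, and use monotonicity in $y$ (equivalently in $s=x-y$) to reach $2-8x^2\ge0$. The differences are cosmetic: in the zero cell you use the rectangle bound $8x^2+2|xy|\le3$ instead of the paper's minimization at $y=x+1/2$, and you handle the corner ties $n=\pm2$ explicitly rather than just choosing a representative from $\{-1,0,1\}$.
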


\begin{proof}
A nearest integer can be chosen from $\{-1,0,1\}$.  In the zero cell, put
$s=x-y$ with $|s|\le1/2$.  The left side minus the right side is
$3-8x^2-2xy$.  For $x\ge0$ this is minimized at $y=x+1/2$, where it is
$3-10x^2-x\ge0$; the $x\le0$ case follows by simultaneous sign reversal.
In the $+1$ cell, $s=x-y\in[1/2,3/2]$ and the difference is
$4-10x^2+2s(x-1)$.  Since $x-1<0$ and $s\le x+1$, it is at least
$2-8x^2\ge0$.  The $-1$ cell is symmetric.  A tie changes only the chosen
representative, not the distance.
\end{proof}

\begin{proposition}[Central block exclusion]
\label{prop:central-block-exclusion}
The block $\{-2,-1,0,1,2\}$ cannot be contained in $J_Z(I)$.
\end{proposition}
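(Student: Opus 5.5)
The plan is to argue by contradiction. I would use only the two extreme levels $j=\pm2$, then combine the outer bound \eqref{eq:outer3} with the Babai-failure condition \eqref{eq:babai-fail} through Lemma~\ref{lem:nearest-integer}.

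\textbf{Setting up the two inequalities.} Suppose $\{-2,\dots,2\}\subseteq J_Z(I)$. Pick occupied lattice points $(t_+,2)$ and $(t_-,-2)$ with $t_\pm\in\Z$. By \eqref{eq:outer3} and \eqref{eq:gauge},
\[
 \alpha(t_+-e_1+2\mu)^2+\beta(2-e_2)^2\le9,\qquad
 \alpha(t_--e_1-2\mu)^2+\beta(2+e_2)^2\le9 .
\]
Since $t_\pm$ are integers, I replace the squared first coordinates by $\dist(e_1-2\mu,\Z)^2$ and $\dist(e_1+2\mu,\Z)^2$. Lemma~\ref{lem:nearest-integer} applies with $x=e_1\in[-\tfrac12,\tfrac12)$ and $y=\pm2\mu\in[-1,1]$; the tie clause covers the endpoint $\mu=-\tfrac12$. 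It gives
\[
 \dist(e_1\mp2\mu,\Z)^2\ge 9e_1^2+4\mu^2-3 .
\]
Adding the two inequalities then yields
\[
 18\ \ge\ \beta(8+2e_2^2)+\alpha\,(18e_1^2+8\mu^2-6).
\]

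\textbf{Closing the estimate.} Next I bring in \eqref{eq:babai-fail} in the form $18\alpha e_1^2+18\beta e_2^2>18$. Subtracting it from the summed inequality leaves the requirement
\[
 8\beta+\alpha(8\mu^2-6)\ \ge\ 16\beta e_2^2
\]
for a contradiction. The reduction condition $\beta\ge\alpha(\tfrac34-\mu^2)$ from \eqref{eq:reduced-contract} makes $8\beta+\alpha(8\mu^2-6)\ge0$, but that is not by itself enough to absorb $16\beta e_2^2$. I would use $e_2^2\le\tfrac14$, so $16\beta e_2^2\le4\beta$, which reduces the goal to $\beta\ge\alpha(\tfrac32-2\mu^2)$.

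\textbf{Main obstacle.} This last step is the one I expect to be hardest, because the reduction inequality is too weak by a factor of two in the regime where $\alpha$ is comparable to $\beta$. I would handle that regime with extra information that the symmetric sum discards.

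First, I would not symmetrize blindly. The two level inequalities give $\beta(2-|e_2|)^2\le9$, hence $\beta\le4$. Together with \eqref{eq:beta-floor} this confines $\beta$ to $(4/3,4]$. I would also keep the exact asymmetric terms $\beta(2\mp e_2)^2$ rather than summing them.

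Second, the levels $j=\pm1$ are also occupied, and their inequalities can be weighted in. I would take a nonnegative combination of the four level inequalities and \eqref{eq:babai-fail}, with weights depending on $|e_2|$. The aim is that the $\beta$-coefficient matches the $\alpha$-coefficient of $e_1^2$ and the $\mu$-terms cancel against $\beta\ge\alpha(\tfrac34-\mu^2)$.

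If no single weighting works uniformly, I would split into the cases $|e_2|\le\tfrac14$ and $|e_2|>\tfrac14$. In the first case $16\beta e_2^2\le\beta$, and the goal becomes $7\beta+\alpha(8\mu^2-6)\ge0$, which follows from the reduction inequality. In the second case the stronger lower bound $\beta e_2^2>\beta/16$ from \eqref{eq:babai-fail}, combined with $\beta\le4$ and a direct one-variable check, should finish the argument.
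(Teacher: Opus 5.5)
There is a genuine gap, and it sits at the step you flag as the main obstacle. After subtracting $18$ times \eqref{eq:babai-fail}, you need $\beta(8-16e_2^2)\ge\alpha(6-8\mu^2)$. The reduction only gives $8\beta\ge\alpha(6-8\mu^2)$. When that reduction inequality is an equality, your target becomes $8-16e_2^2\ge8$, which fails for every $e_2\ne0$. So the symmetric sum cannot close.

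Your case $|e_2|\le\frac14$ is therefore wrong as written. The claim $7\beta+\alpha(8\mu^2-6)\ge0$ does not follow from $\beta\ge\alpha(\frac34-\mu^2)$: with $\mu=0$ and $\beta=\frac34\alpha$ the left side equals $-\frac34\alpha$. In the case $|e_2|>\frac14$ you need an \emph{upper} bound on $16\beta e_2^2$, so the lower bound $\beta e_2^2>\beta/16$ points the wrong way, and the ``one-variable check'' is never specified. The bound $\beta\le4$ cannot help with a target that is homogeneous in $(\alpha,\beta)$. The levels $\pm1$ are weaker still, since there the quantity $(j-e_2)^2-9\vartheta^2$ is at most $\frac98$ and the lemma only supplies $\mu^2$ rather than $4\mu^2$.

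The loss comes from the near level. Write $\vartheta=|e_2|$. For $j\in\{-2,2\}$, adding the level-$j$ inequality to nine times \eqref{eq:babai-fail}, then using Lemma~\ref{lem:nearest-integer} and $4\beta\ge\alpha(3-4\mu^2)$, closes as soon as $(j-e_2)^2-9\vartheta^2\ge4$.

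For the far level $j_f$, defined by $|j_f-e_2|=2+\vartheta$, this quantity is $4+4\vartheta(1-2\vartheta)\ge4$. For the near level it is $4-4\vartheta-8\vartheta^2$, which drops to $0$ at $\vartheta=\frac12$. Averaging the two loses exactly the factor of two you observed. More precisely, a positive weight $\lambda_n$ on the near level is tolerable in your chain only if $\lambda_n\le\frac{1-2\vartheta}{1+2\vartheta}\lambda_f$, so no symmetric weighting works.

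The fix is to drop the near level entirely. With $d=\dist(e_1-\mu j_f,\Z)$,
\[
 \alpha(d^2-9e_1^2)+4\beta\le\alpha(d^2-9e_1^2)+\beta\{(2+\vartheta)^2-9\vartheta^2\}<0 .
\]
Combining this with $4\beta\ge\alpha(3-4\mu^2)$ gives $d^2+3<9e_1^2+4\mu^2$. This contradicts Lemma~\ref{lem:nearest-integer} applied with $x=e_1$ and $y=\mu j_f$. That is the paper's proof, and it uses only the single occupied level $j_f$.
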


\begin{proof}
Put $\vartheta=|e_2|$ and choose $j_f\in\{-2,2\}$ so that
$|j_f-e_2|=2+\vartheta$.  Set
\[
 d=\dist(e_1-\mu j_f,\Z).
\]
Occupancy of level $j_f$, \eqref{eq:babai-fail}, and
\eqref{eq:outer3} give
\[
 \alpha d^2+\beta(2+\vartheta)^2\le9
 <9\alpha e_1^2+9\beta\vartheta^2.
\]
Since $0\le\vartheta\le1/2$,
$(2+\vartheta)^2-9\vartheta^2=4+4\vartheta-8\vartheta^2\ge4$.
Consequently,
\begin{align*}
0
&>\alpha(d^2-9e_1^2)
 +\beta\{(2+\vartheta)^2-9\vartheta^2\}\\
&\ge\alpha(d^2-9e_1^2)+4\beta\\
&\ge\alpha\{d^2-9e_1^2+3-4\mu^2\},
\end{align*}
where the last line uses $\beta/\alpha\ge3/4-\mu^2$.
On the other hand, $|\mu j_f|\le1$, so
Lemma~\ref{lem:nearest-integer}, applied with $x=e_1$ and
$y=\mu j_f$, yields
\[
 d^2+3\ge9e_1^2+(\mu j_f)^2=9e_1^2+4\mu^2.
\]
This is the reverse of the preceding strict inequality.
\end{proof}

\section{Side normalization and eight modes}
\label{sec:eight-modes}

Normalize the side configuration to $\{-1,0,1,2,3\}$.  Layer reversal is
the coordinate change
\[
 (t,j,e_2,\mu)\longmapsto(t,-j,-e_2,-\mu).
\]
It preserves both the gauge and occupation.  If either $e_2=-1/2$ or
$\mu=-1/2$, the corresponding reflected
value $1/2$ is retained only in the larger closed relaxation
$e_2,\mu\in[-1/2,1/2]$ used below.  Thus the normalization does not discard a
boundary configuration, while the original deterministic tie convention
remains half-open.

\begin{proposition}[Side localization and complete mode list]
\label{prop:side-eight-modes}
Assume that the normalized side block $\{-1,0,1,2,3\}$ is occupied.
After the allowed first-coordinate reflection, write
\[
 \vartheta=e_2>0,\qquad e=|e_1|,\qquad h=|\mu|.
\]
Then
\begin{equation}
 e>\frac{47}{100},\qquad h>\frac{11}{25},\qquad
 \vartheta>\frac{23}{50},
 \label{eq:ehq}
\end{equation}
and
\begin{equation}
 \frac{64}{25}<\alpha\le\frac{72}{25},\qquad
 \frac43<\beta\le\frac{36}{25},\qquad
 \frac12\le r:=\frac\beta\alpha<\frac9{16}.
 \label{eq:alphabeta}
\end{equation}
Moreover, the occupied points on rows $3$ and $2$ must belong to exactly one
of the eight modes in \eqref{eq:eight-mode-table} below.
\end{proposition}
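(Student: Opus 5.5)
The plan is to extract the inequalities \eqref{eq:ehq}--\eqref{eq:alphabeta} one at a time. Each step uses the Babai-failure inequality \eqref{eq:babai-fail}, the outer bound \eqref{eq:outer3} on the extreme occupied rows, and the reduction contract \eqref{eq:reduced-contract}, and feeds the bounds already obtained into the next step.

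\textbf{Sign of $e_2$ and the bounds on $\beta$ and $\alpha$.} First I will show $\vartheta=e_2>0$. If $e_2\le0$, an occupied point on row $3$ has $|3-e_2|\ge3$, so $\beta(3-e_2)^2\ge9\beta>12>9$ by \eqref{eq:beta-floor}, contradicting \eqref{eq:outer3}. Next, row $3$ is occupied and $\vartheta\le1/2$, so \eqref{eq:outer3} gives
\[
 \beta\Bigl(\tfrac52\Bigr)^2\le\beta(3-\vartheta)^2\le9 .
\]
This yields $\beta\le36/25$, and $\beta>4/3$ is \eqref{eq:beta-floor}. Babai failure then gives
\[
 \alpha e^2>1-\beta\vartheta^2\ge1-\tfrac{36}{25}\cdot\tfrac14=\tfrac{16}{25}.
\]
Since $e\le1/2$, this forces $\alpha>64/25$.

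\textbf{Bounds on $r$, $e$ and $\vartheta$.} The reduction bound $\eta\ge a_G/2$ gives $r=\beta/\alpha\ge1/2$. Hence $\alpha\le2\beta\le72/25$. Also $r<(36/25)/(64/25)=9/16$, which completes \eqref{eq:alphabeta}. Feeding $\alpha\le72/25$ back into $\alpha e^2>16/25$ gives $e^2>2/9$, so $e>0.471>47/100$. For $\vartheta$ I will combine the two row-$3$-type constraints instead of using them separately. Babai failure together with $\alpha e^2\le\alpha/4\le\beta/2$ gives $\beta(\vartheta^2+\tfrac12)>1$. Together with $\beta(3-\vartheta)^2\le9$ this yields
\[
 (3-\vartheta)^2<9\vartheta^2+\tfrac92,
 \qquad\text{that is,}\qquad
 8\vartheta^2+6\vartheta-\tfrac92>0 .
\]
Its positive root is $(-6+\sqrt{180})/16\approx0.4635$, so $\vartheta>23/50$.

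\textbf{The bound on $h$.} This is the step I expect to be the main obstacle. Row $3$ carries the horizontal term $\alpha\,d_3^2$ with $d_3=\dist(e_1-3\mu,\Z)$. Using $\beta(3-\vartheta)^2\ge\tfrac43(\tfrac52)^2$ and $\alpha>64/25$, I get $d_3^2<(9-\tfrac{25}{3})/\alpha<0.27$. This alone is weak. Since $e$ is close to $1/2$, it says that $3\mu$ must lie near $e_1$ modulo $1$, which leaves two families: $h$ near $1/2$ and $h$ near $1/6$. To exclude the second family I will bring in row $2$ and row $-1$. The tool for row $-1$ is a nearest-integer estimate of the type in Lemma~\ref{lem:nearest-integer}; for the pair (row $-1$, row $3$) it is the strict diameter bound of Lemma~\ref{lem:disk-slab-diameter}, read through \eqref{eq:isometry}. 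Vertical separation $4$ already consumes $16\beta>21$ of the budget $32$, so the horizontal offset between the two rows must be small, and this offset is controlled by $\mu$. I would first try to close this purely with the stated numerical ranges. Failing that, I would subdivide $h$ into finitely many intervals and check each against the row-$3$ and row-$2$ inequalities.

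\textbf{The eight modes.} Once $e$, $h$ and $\vartheta$ are localized near $1/2$, the integer $t$ of an occupied point on rows $3$ and $2$ must be one of the two nearest integers to $e_1-\mu j$. This is because $\alpha>64/25$ and the remaining budget after the $\beta$-term excludes anything farther. For each of rows $3$ and $2$ this gives two choices. The sign of $\mu$ relative to $e_1$ (after the first-coordinate reflection) gives a further binary choice. Together these produce the $2\times2\times2=8$ modes of \eqref{eq:eight-mode-table}. Completeness follows because every occupied configuration has been forced into one of these cells.
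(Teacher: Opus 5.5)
Your arguments for $\vartheta>0$, for \eqref{eq:alphabeta} and for $\vartheta>23/50$ are correct and match the paper. Deriving $e>47/100$ from $\alpha e^2>16/25$ and $\alpha\le 72/25$ (so $e>\sqrt2/3$) is a valid shortcut. The genuine gap is the bound on $h$. Your row-$3$ estimate $d_3^2<0.27$ only says $d_3<0.52$, and every real number satisfies that. So it does not single out ``$h$ near $1/2$ or near $1/6$''. Your fallback through rows $2$, $-1$ and Lemma~\ref{lem:disk-slab-diameter} also cannot succeed if your only reduction input is $r\ge 1/2$ and your occupancy input is $\mathcal H\le 9$. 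For example, take $\mu=1/6$, $e_1=e_2=0.499$, $\alpha=2.6$, $\beta=1.43$. This gives $E_B>1$, $r=0.55\in[1/2,9/16)$, an integer point with $\mathcal H\le 9$ on every row $-1,\dots,3$, and a row-$3$/row-$(-1)$ squared $H$-distance of about $23.2<32$. The missing idea is the $\mu$-dependent inequality $\beta\ge\alpha(3/4-\mu^2)$ in \eqref{eq:reduced-contract}, which none of your steps invokes. Combined with your $r<9/16$, it already gives $h>\sqrt3/4$. To reach $11/25$, proceed as the paper does:
\begin{itemize}
\item combine row-$3$ occupancy with strict failure, $\alpha d^2+\beta(3-\vartheta)^2\le 9<9\alpha e^2+9\beta\vartheta^2$, where $d=\dist(e-3\mu,\Z)$;
\item use $(3-\vartheta)^2-9\vartheta^2\ge 4$ and $\beta\ge\alpha(3/4-h^2)$ to get $9e^2+4h^2-d^2>3$;
\item for $h\le 11/25$, the nearest integer to $e\mp 3h$ is $-1$, respectively $2$, and the left side is at most its value $9h-5h^2<3$ at $e=1/2$.
\end{itemize}

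Your final step also does not produce the table \eqref{eq:eight-mode-table}, because the cell count per row is wrong. Row $3$ has only one cell, not two. Its horizontal budget is below $9-25/3=2/3$, so $|t_3-(e-3\mu)|<\sqrt{25/96}<0.52$. The centre lies in $(-103/100,-82/100)$ or in $(179/100,2]$, which forces $t_3=-1$ or $t_3=2$. Row $2$ has more than two cells. Its vertical energy exceeds $3$ only modestly, so the budget gives only $|t_2-(e-2\mu)|<\sqrt{6/\alpha}<\sqrt{75/32}\approx 1.53$, and your claim that the two nearest integers exhaust it is false. Take $\mu=+h$, $h=0.46$, $e=\vartheta=0.499$, $\alpha=2.6$, $\beta=1.42$. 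This satisfies every bound in the statement, together with $r\ge 3/4-h^2$, $E_B>1$ and $\mathcal H\le 9$ at $t_3=-1$. In it, the cell $t_2=1$, at distance $1.421$ from the centre $-0.421$, has $\mathcal H\approx 8.45\le 9$. The correct decomposition is $2$ sign families $\times$ $1$ row-$3$ cell $\times$ $4$ row-$2$ cells. Your $2\times2\times2$ list is a different set and omits admissible cells such as $(+h,t_2=1)$.
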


\begin{proof}
If the transformed $e_2\le0$, every point on row $3$ satisfies
\[
 \mathcal H(t,3)\ge\beta(3-e_2)^2\ge9\beta>9,
\]
contrary to occupancy.  Hence $\vartheta=e_2>0$.

Replacing the first reduced basis vector by its negative sends
$(t,e_1,\mu)$ to $(-t,-e_1,-\mu)$ and leaves
$t-e_1+\mu j$ unchanged up to sign.  Hence $e=|e_1|$ may be used
and retain the two signs of $\mu$ as separate families.  Any newly included
half-open endpoint is used only in a larger closed relaxation.

Let $d=\dist(e-3\mu,\Z)$.  Row-$3$ occupancy and strict Babai failure give
\[
 \alpha d^2+\beta(3-\vartheta)^2
 <9\alpha e^2+9\beta\vartheta^2.
\]
Since
\[
 (3-\vartheta)^2-9\vartheta^2
 =9-6\vartheta-8\vartheta^2\ge4,
\]
and $\beta/\alpha\ge3/4-h^2$, it follows that
\begin{equation}
 9e^2+4h^2-d^2>3.
 \label{eq:side-base}
\end{equation}
If $e\le2/5$, the left side of \eqref{eq:side-base} is at most
\[
 9(2/5)^2+4(1/2)^2=\frac{61}{25}<3.
\]
If $h\le2/5$, it is at most
\[
 9(1/2)^2+4(2/5)^2=\frac{289}{100}<3.
\]
Thus $e,h>2/5$.  If $e\le47/100$, the same expression is at most
\[
 9(47/100)^2+1=\frac{29881}{10000}<3.
\]
For $2/5<h\le11/25$, the nearest integer to $e-3h$ is $-1$, whereas
the nearest integer to $e+3h$ is $2$.  In either family the left side of
\eqref{eq:side-base} is maximized at $e=1/2$ and equals $9h-5h^2$.
It is therefore at most
\[
 9\frac{11}{25}-5\left(\frac{11}{25}\right)^2
 =\frac{1870}{625}<3.
\]
This proves the bounds on $e$ and $h$ in \eqref{eq:ehq}.

If $\vartheta\le23/50$, row-$3$ occupancy gives
$\beta\le9/(3-\vartheta)^2$.  Reduction gives $\alpha\le2\beta$, and hence
\[
E_B\le\beta(1/2+\vartheta^2)
\le\frac{9(1/2+\vartheta^2)}{(3-\vartheta)^2}
\le\frac{16011}{16129}<1.
\]
This contradiction proves $\vartheta>23/50$.
Row-$3$ occupancy gives $\beta\le36/25$, while strict failure and
$e^2,\vartheta^2\le1/4$ give $\alpha+\beta>4$.  Together with
$\alpha\le2\beta$, these inequalities prove \eqref{eq:alphabeta}.

For $\mu=+h$, the row-$3$ centre $e-3h$ lies in
$(-103/100,-82/100)$, so its unique nearest integer is $t_3=-1$.
For $\mu=-h$, the centre $e+3h$ lies in $(179/100,2]$, so $t_3=2$.
Indeed, the row-$3$ vertical energy is greater than $25/3$, so
\[
 |t_3-(e-3\mu)|<\sqrt{\frac{2}{3\alpha}}<\sqrt{\frac{25}{96}},
\]
which proves uniqueness in both centre intervals.

On row $2$ the vertical energy is greater than $3$; hence
\[
 |t_2-(e-2\mu)|<\sqrt{6/\alpha}<\sqrt{75/32}.
\]
For $\mu=+h$, the centre $e-2h$ lies in
$(-53/100,-19/50)$, and every integer outside $\{-2,-1,0,1\}$ is at
distance greater than $119/50$.  For $\mu=-h$, the centre $e+2h$ lies in
$(27/20,3/2]$, and every integer outside $\{0,1,2,3\}$ is at distance
greater than $47/20$.  Since $\alpha>64/25$, every excluded integer has
horizontal energy greater than $6$.  The complete cell table is therefore
\begin{equation}
\begin{array}{c|c|c}
\text{family}&t_3&t_2\\ \hline
\mu=+h&-1&-2,-1,0,1\\
\mu=-h& 2&0,1,2,3.
\end{array}
\label{eq:eight-mode-table}
\end{equation}
Every hypothetical side counterexample belongs to one of these eight modes.
\end{proof}

For a fixed mode define
\begin{equation}
\begin{aligned}
a_3&=t_3-e+3\mu,& b_3&=3-\vartheta,\\
a_2&=t_2-e+2\mu,& b_2&=2-\vartheta,\\
d&=t_3-t_2+\mu,\\
p&=a_3^2+r b_3^2,&t&=a_3d+r b_3,\\
g&=e^2+r\vartheta^2,&\delta&=a_3b_2-a_2b_3.
\end{aligned}
\label{eq:mode-quantities}
\end{equation}
Writing $P=\alpha p$ for the row-$3$ norm, $C$ for its dot product with row
$2$, and $\Delta$ for the determinant magnitude gives
\begin{equation}
P-C=\alpha t,\qquad
\Delta=\alpha\sqrt r\,|\delta|,\qquad E_B=\alpha g.
\label{eq:geometric-quantities}
\end{equation}
The determinant polynomial simplifies to
\[
 \delta=e-\mu\vartheta+t_3(2-\vartheta)-t_2(3-\vartheta).
\]
For the eight cells, the expression to be bounded is therefore
\[
\begin{array}{c|r|l}
\mu&t_2&\delta\\ \hline
+h&-2&4+e-\vartheta-h\vartheta\\
+h&-1&1+e-h\vartheta\\
+h& 0&e-2+\vartheta-h\vartheta\\
+h& 1&e-5+2\vartheta-h\vartheta\\
-h& 0&4+e-2\vartheta+h\vartheta\\
-h& 1&1+e-\vartheta+h\vartheta\\
-h& 2&e-2+h\vartheta\\
-h& 3&e-5+\vartheta+h\vartheta.
\end{array}
\]
Endpoint evaluation on the closed box containing \eqref{eq:ehq} gives
\begin{equation}
\begin{array}{c|r|c@{\qquad}c|r|c}
\mu&t_2&\delta\text{ interval}&\mu&t_2&\delta\text{ interval}\\ \hline
+h&-2&[93/25,4797/1250]&-h&0&[369/100,381/100]\\
+h&-1&[61/50,811/625]&-h&1&[119/100,127/100]\\
+h&0&[-13/10,-61/50]&-h&2&[-3319/2500,-5/4]\\
+h&1&[-96/25,-93/25]&-h&3&[-9669/2500,-15/4].
\end{array}
\label{eq:delta-table}
\end{equation}
Every mode therefore has $|\delta|\ge119/100$.  The four easy modes
$(+h,-2),(+h,1),(-h,0),(-h,3)$ have $|\delta|>7/2$; the remaining
four have $|\delta|<4/3$.

The remaining uniform estimates also follow from the same finite intervals.
In the two sign families,
\[
 a_3\in[-9/50,3/100]
 \quad\text{or}\quad
 a_3\in[0,21/100].
\]
The eight cells give
\[
 |a_2|\le\frac{33}{20},\qquad
 a_3d\ge-\frac{63}{200},\qquad
 |a_3a_2|\le\frac{693}{2000}.
\]
Since $r\ge1/2$ and $b_3=3-\vartheta>5/2$,
\[
 t=a_3d+rb_3
 \ge-\frac{63}{200}+\frac12\frac52
 =\frac{187}{200}>0.
\]
Moreover,
\[
\begin{aligned}
7p-30g
&=7a_3^2-30e^2
 +r\{7(3-\vartheta)^2-30\vartheta^2\}\\
&\ge-\frac{30}{4}
 +\frac12\left(7\frac{25}{4}-30\frac14\right)
 =\frac{85}{8}>0.
\end{aligned}
\]
For the row dot product,
\[
 \alpha|a_3a_2|
 \le\frac{72}{25}\frac{693}{2000}
 =\frac{6237}{6250}<1,
\]
whereas
\[
 \beta b_3b_2>
 \frac43\frac52\frac32=5.
\]
Thus $C=\alpha a_3a_2+\beta b_3b_2>4$.  In addition,
\[
 P-C=\alpha t>\frac{64}{25}\frac{187}{200}>2,
 \qquad
 P\ge\beta b_3^2>\frac43\frac{25}{4}=\frac{25}{3}.
\]
Finally, $\alpha(1+r)=\alpha+\beta>4$ and $r\ge1/2$ imply
\[
 \alpha\sqrt r>\frac{4\sqrt r}{1+r}\ge\frac{4\sqrt2}{3}.
\]
Together with \eqref{eq:delta-table}, this proves
$\Delta>119\sqrt2/75>219/100$.  The resulting uniform bounds are
\begin{equation}
t>0,\quad 7p-30g>0,\quad C>4,\quad P-C>2,\quad
\Delta>\frac{219}{100},\quad P>\frac{25}{3}.
\label{eq:uniform-bounds}
\end{equation}
In an easy mode, $\Delta>4$.  In a hard mode,
$\alpha\le72/25$, $\sqrt r<3/4$, and $|\delta|<4/3$, so
$\Delta<72/25<3$.  For later use, the hard cells also give
$|a_2|\le13/20$ and $b_2<77/50$, whence
\begin{equation}
 P_2=\alpha a_2^2+\beta b_2^2
 <\frac{72}{25}\left(\frac{13}{20}\right)^2
  +\frac{36}{25}\left(\frac{77}{50}\right)^2
 =\frac{144747}{31250}<\frac{116}{25}.
 \label{eq:P2-bound}
\end{equation}
\begin{proposition}[Certified hard-mode inequality]
\label{prop:certified-hard-mode}
For each of the four hard modes,
\begin{equation}
 \sqrt r\,|\delta|(7p-30g)>20tg.
 \label{eq:cert-inequality}
\end{equation}
\end{proposition}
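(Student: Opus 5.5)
The plan is to reduce \eqref{eq:cert-inequality} to strict positivity of a polynomial on a box, and certify that positivity by Bernstein coefficients. The abstract already advertises the outcome: a four-variable certificate with $17{,}640$ positive coefficients and minimum $625/2048$.

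\textbf{Step 1: variables and box.} In each hard mode, $(+h,-1),(+h,0),(-h,1),(-h,2)$, the quantities in \eqref{eq:mode-quantities} are polynomials in $(e,h,\vartheta,r)$ once $\mu=\pm h$ and the mode's integers $t_3,t_2$ are substituted. By \eqref{eq:ehq} and \eqref{eq:alphabeta} these variables lie in the closed box
\[
e\in[\tfrac{47}{100},\tfrac12],\quad h\in[\tfrac{11}{25},\tfrac12],\quad \vartheta\in[\tfrac{23}{50},\tfrac12],\quad r\in[\tfrac12,\tfrac9{16}].
\]
I enlarge to this closed box, consistent with the closed relaxation of Section~\ref{sec:eight-modes}. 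On it, \eqref{eq:delta-table} shows that $\delta$ has constant sign in each mode, so $|\delta|=\pm\delta$ is polynomial there. Also $7p-30g>0$ and $t>0$ by \eqref{eq:uniform-bounds}, and $g>0$.

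\textbf{Step 2: remove the square root.} The only non-polynomial ingredient is $\sqrt r$. Since both sides of \eqref{eq:cert-inequality} are positive, squaring gives the equivalent inequality
\[
\Phi:=r\,\delta^2(7p-30g)^2-400\,t^2g^2>0.
\]
This is a polynomial in four variables. I would prefer this over substituting $r=s^2$ with $s\in[\sqrt2/2,3/4]$, because that endpoint is irrational. Next I map the box affinely onto $[0,1]^4$ using rational endpoints and compute the tensor-product Bernstein expansion of $\Phi$ in exact rational arithmetic, with degree in each variable equal to its partial degree. Since $\Phi$ is a convex combination of its Bernstein coefficients, it suffices to check that every coefficient is positive.

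\textbf{Step 3: subdivide and tabulate.} If some coefficients on the whole box are nonpositive, I apply uniform de~Casteljau subdivision, most likely in $e$ and $\vartheta$ where the slack is smallest, until all coefficients are positive. I then record the number of subboxes and the minimal coefficient; the total of $17{,}640$ coefficients presumably reflects such a split across the four modes. For the written proof I would list the degrees, the subdivision, and the minimal coefficient per mode, and make the computation reproducible.

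\textbf{Main obstacle.} The danger is that the slack is thin near the corner $e=h=\vartheta=\tfrac12$, where $g$ is largest and $|\delta|$ is smallest. There the Bernstein bound may need considerable subdivision, and the squared form raises the degrees. If positivity fails at a boundary point rather than merely being loosely certified, the fallback is to tighten the box using constraints the localization already supplies. One is Babai failure, $\alpha g>1$ with $\alpha\le 72/25$, which gives $g>25/72$. The others are the row-$2$ occupancy constraints. Either would be added as a multiplier in a Positivstellensatz-style certificate.
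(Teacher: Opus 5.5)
\textbf{There is a genuine gap: your box drops a constraint the inequality needs.} Your overall plan matches the paper's: square away $\sqrt r$, then certify $\mathcal B=r\delta^2(7p-30g)^2-400t^2g^2>0$ by exact Bernstein coefficients on a rational box. The problem is the box. You let $r$ range over $[\tfrac12,\tfrac9{16}]$ independently of $h$. This discards the reduction condition $\beta\ge\alpha(\tfrac34-\mu^2)$, i.e.\ $r\ge\tfrac34-h^2$.

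\textbf{This constraint is not slack, and on your box the inequality is false.} At the corner $e=h=\vartheta=\tfrac12$, $r=\tfrac12$, every hard mode has $\mathcal B=625/2048$ exactly, and the inequality fails for $r$ slightly below $\tfrac12$ (e.g.\ $r=0.49$). Once $h<\tfrac12$, your box contains points with $r<\tfrac34-h^2$, and there it fails outright. Take mode $(+h,0)$ at $(e,h,\vartheta,r)=(\tfrac12,\tfrac9{20},\tfrac12,\tfrac12)$. Then $a_3=-\tfrac3{20}$, $d=-\tfrac{11}{20}$, $t=\tfrac{533}{400}$, $g=\tfrac38$, $7p-30g=\tfrac{4313}{400}$ and $|\delta|=\tfrac{49}{40}$. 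So $\sqrt r\,|\delta|(7p-30g)\approx9.34<9.99\approx20tg$, and $\mathcal B\approx-12.6$. Mode $(-h,1)$ gives identical values. No de~Casteljau subdivision can certify positivity of a polynomial that is negative somewhere on the box.

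\textbf{Your fallbacks do not exclude this point.} There $g=\tfrac38>\tfrac{25}{72}$. The row-$2$ quantity $a_2^2+rb_2^2=1.285$ is far below the occupancy limit. Any $\alpha\in(\tfrac83,\,9/p]\approx(2.667,2.859]$ is compatible with row-$3$ occupancy, strict Babai failure and \eqref{eq:alphabeta}. The only thing violated is $r\ge\tfrac34-h^2$.

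\textbf{The missing idea is to build the reduction inequality into the coordinates, as the paper does.} Set $r=\tfrac34-h^2+R$ with $0\le R\le\tfrac1{16}$. The upper bound holds because $r<\tfrac9{16}\le\tfrac{13}{16}-h^2$ for $h\le\tfrac12$. Take also $e=\tfrac12-E$, $h=\tfrac12-H$, $\vartheta=\tfrac12-Q_0$ on their stated ranges. This box is still rational and contains the true domain, and it follows the curved lower boundary of $r$. The substitution raises the degree in $H$ to $8$, giving tensor degree $(4,6,8,6)$. A single Bernstein expansion per mode, with no subdivision, then already has all $2205=5\cdot7\cdot9\cdot7$ coefficients positive, with minimum $625/2048$ at $R=E=H=Q_0=0$. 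The figure $17{,}640=8\cdot2205$ comes from running this on all eight modes (easy ones included), not from a subdivision count as you guessed. With this change of variable, the rest of your plan (sign checks, squaring, partition of unity) goes through unchanged.
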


\begin{proof}
On the actual side domain,
\[
 t>0,\qquad 7p-30g>0,\qquad r>0,\qquad g>0.
\]
Introduce the closed-box variables
\[
 r=\frac34-h^2+R,\qquad
 e=\frac12-E,\qquad
 h=\frac12-H,\qquad
 \vartheta=\frac12-Q_0,
\]
where
\[
 0\le R\le\frac1{16},\qquad
 0\le E\le\frac3{100},\qquad
 0\le H\le\frac3{50},\qquad
 0\le Q_0\le\frac1{25}.
\]
This closed rational box contains the actual parameter domain.  For each
mode define
\[
 \mathcal B=r\delta^2(7p-30g)^2-400t^2g^2.
\]
After scaling the box to $[0,1]^4$, the polynomial has tensor degree
$d=(4,6,8,6)$.  If $a_\nu$ is its power coefficient, its Bernstein
coefficient of degree $d$ is
\[
 b_k=\sum_{\nu\le k}a_\nu
 \prod_{i=0}^3
 \frac{\binom{k_i}{\nu_i}}{\binom{d_i}{\nu_i}}.
\]
Appendix~\ref{app:certificate} records the exact result for every mode.
There are $2205$ coefficients per mode, all are positive, and the smallest
coefficient among all eight modes is $625/2048$.  Because the Bernstein
basis is nonnegative and sums to one on the unit cube,
\[
 \mathcal B\ge\frac{625}{2048}>0.
\]
The signs displayed at the start of the proof hold on the actual domain, so
taking positive square roots gives \eqref{eq:cert-inequality}.
\end{proof}

\section{Shared-cap contradiction}
\label{sec:shared-cap}

\begin{proposition}[Exclusion of the normalized side block]
\label{prop:side-block-exclusion}
The five levels $\{-1,0,1,2,3\}$ cannot all be occupied.
\end{proposition}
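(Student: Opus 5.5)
The plan is to fix one of the eight modes of Proposition~\ref{prop:side-eight-modes}, pick occupied points $v_3=(t_3,3)$ and $v_2=(t_2,2)$, and work with their images $P_3=(X_3,Y_3)$ and $P_2=(X_2,Y_2)$ in the common coordinates of \eqref{eq:common-cap}. The quantities in \eqref{eq:geometric-quantities} then become Euclidean data of the triangle with vertices $O$ (the image of the ellipse centre $z_E$), $P_3$ and $P_2$, via the isometry \eqref{eq:isometry}:
\[
 P=|OP_3|^2,\qquad P_2=|OP_2|^2,\qquad C=\langle OP_3,OP_2\rangle,\qquad \Delta=|X_3Y_2-X_2Y_3|,
\]
and $E_B$ is the squared distance from $O$ to the image of the Babai point. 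The contradiction will come from the fact that $P_3$ and $P_2$ lie in the same cap region, with the same $(\lambda,\rho)$. By \eqref{eq:uniform-bounds}, $P>25/3$, so $P_3$ is far from $O$. Since $Y^2\le 4(1-\lambda X)(1+\rho X)$ and $(1-\lambda X)(1+\rho X)\le 1+(1-\rho)|X|$ when $X<0$, a point at squared distance more than $25/3$ from $O$ must have $X$ close to $-1$ with $|Y|$ close to $2\sqrt2$. This region lies near the corner where the diameter bound of Lemma~\ref{lem:disk-slab-diameter} is nearly attained.

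\textbf{Step 1 (the shared-cap inequality).} I will bound $\Delta$ for two cap points in terms of $P$, $P-C$ and one further parameter. The factor $1+\rho X$ couples both points through the single $\rho$: the upper boundary is concave in $X$, so the segment $P_3P_2$ cannot turn sharply away from the ray $OP_3$. Parametrizing the cap boundary by $X$ and eliminating $(\lambda,\rho)$, I expect an inequality of the form $\Delta\le c_1\,(P-C)$ near the corner. For the easy modes a cruder estimate should already give $\Delta\le 4$, whereas \eqref{eq:uniform-bounds} gives $\Delta>4$ there. This disposes of the four easy modes, using only $P>25/3$, $P-C>2$, $C>4$ and the diameter bound.

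\textbf{Step 2 (bringing in $E_B$ for the hard modes).} In the hard modes $\Delta<3$, so the crude estimate is not enough and the Babai-failure quantity $E_B=\alpha g>1$ must enter. I would use the occupied lower rows $-1,0,1$ together with $E_B$ to locate $O$ relative to the cap: because the reduced gauge forces the body to stretch across all five rows, $O$ cannot be too close to the narrow end. The goal is a refined inequality of the shape
\[
 20\,(P-C)\,E_B\;\ge\;\Delta\,(7P-30E_B)
\]
for every occupied configuration, where $7P-30E_B>0$ by \eqref{eq:uniform-bounds}. Substituting \eqref{eq:geometric-quantities} and cancelling $\alpha^2$ turns this into $20tg\ge\sqrt r\,|\delta|(7p-30g)$. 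That is exactly the reverse of Proposition~\ref{prop:certified-hard-mode}, so the four hard modes are excluded. I may also need \eqref{eq:P2-bound} to keep $P_2$ inside the range where the boundary comparison is valid.

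\textbf{Expected obstacle.} The hard part is deriving the refined shared-cap inequality of Step 2 with exactly these constants $7,30,20$. The cap shape depends on the unknown pair $(\lambda,\rho)$, and the estimate has to hold uniformly over $0<\rho\le\lambda\le1$. My first attempt would be to maximise $\Delta$ for fixed $P$, $C$ and $E_B$: first over $\lambda$, where the extremum should be $\lambda=1$ because the factor $1-\lambda X$ only helps when $X<0$, and then over $\rho$. This reduces the problem to a one-parameter boundary-curve computation, which I would check by Cauchy--Schwarz as in the proof of Lemma~\ref{lem:disk-slab-diameter}.
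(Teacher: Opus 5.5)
You have the right skeleton: the easy modes fail because $\Delta>4$ exceeds a geometric upper bound, and in the hard modes your target $20(P-C)E_B\ge\Delta(7P-30E_B)$ is, under \eqref{eq:geometric-quantities}, exactly the reverse of \eqref{eq:cert-inequality}. It does follow from the paper's argument. But you leave open the geometric inequality that produces it, and that inequality is the whole content of the proposition. As written, Step~2 restates the conclusion rather than proving it. Two ingredients are missing. \emph{First, localize the row-$2$ image.} Put $P_3=(-s,y)$ with $4/5<s\le1$ and $y>27/10$. The dot-product and determinant equations give $x_2=(-Cs\pm\Delta y)/P$. The bounds $C>4$, $\Delta>219/100$ and $P\le9$ push the minus root below $-1$. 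The identity $(\Delta y)^2-(Cs)^2=P(\Delta^2-P_2s^2)>0$ makes the plus root positive; this is where \eqref{eq:P2-bound} is needed. Hence the row-$2$ image is $(z,y_2)$ with $0<z\le1$ and $y_2=(Cy+\Delta s)/P>0$, and $\Delta=sy_2+zy$. \emph{Second, combine the two caps.} Multiply \eqref{eq:common-cap} at $(-s,y)$ by $z$ and at $(z,y_2)$ by $s$. The $(\lambda-\rho)X$ terms cancel exactly and the leftover $-\lambda\rho sz$ is negative, so $(zy^2+sy_2^2)/(s+z)<4$ for every admissible $(\lambda,\rho)$. Weighted Cauchy--Schwarz then gives $\Delta<2(s+z)\le4$, which settles the easy modes with no optimization over $(\lambda,\rho)$.

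Your proposed reduction to $\lambda=1$ rests on the wrong picture. The row-$2$ image sits at $X>0$, where the cap shrinks as $\lambda$ grows, so the two points pull $\lambda$ in opposite directions. Also, $P>25/3$, $P-C>2$, $C>4$ and the diameter bound alone do not give $\Delta\le4$: they allow $P=P_2=9$ and $C=41/10$, hence $\Delta\approx8$. The shared cap must enter explicitly.

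For the hard modes you still have to turn $\Delta<2(s+z)$ into your target, and this is where the constants $7,30,20$ come from. The rows $-1,0,1$ play no role, and $E_B$ enters only through $E_B>1$. From $P(s+z)=(P-C)s+\Delta\sqrt{P-s^2}$ one gets $\Phi(s)=2(P-C)s+2\Delta\sqrt{P-s^2}-P\Delta=P\{2(s+z)-\Delta\}$. This function is increasing on the hard-mode range, where $P-C>2$, $\Delta<3$ and $\sqrt{P-s^2}>27/10$. Since $s\le1$, everything reduces to the sign of $\Phi(1)=2(P-C)-\Delta G(P)$ with $G(P)=P-2\sqrt{P-1}$. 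The elementary bound $G(P)>7P/10-3$ on $25/3<P\le9$, together with $E_B>1$, gives $G(P)>(7P-30E_B)/10$. Proposition~\ref{prop:certified-hard-mode} then yields $\Delta G(P)>2(P-C)E_B>2(P-C)$. So $\Phi(s)\le\Phi(1)<0$, i.e.\ $\Delta>2(s+z)$, contradicting the shared-cap bound. Your guessed shape $\Delta\le c_1(P-C)$ is correct, with $c_1=2/G(P)$ after this reduction. Without the weighted cap combination and the monotone reduction in $s$, however, it remains unproved.
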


\begin{proof}
Fix one of the eight modes and let $(X,Y)$ be the row-$3$ cap coordinates.
If $X\ge0$, then \eqref{eq:common-cap} gives $Y^2\le4$, and hence
\[
 P=X^2+Y^2\le5<\frac{25}{3},
\]
contradicting the bound $P>25/3$ in \eqref{eq:uniform-bounds}.
If $-4/5\le X<0$, write $X=-s$ with $0<s\le4/5$.  The factored cap bound
gives
\[
 Y^2\le4(1+\lambda s)(1-\rho s)\le4(1+s),
\]
and therefore
\[
 P=s^2+Y^2\le s^2+4(1+s)\le\frac{196}{25}<\frac{25}{3},
\]
the same contradiction.  Thus $X<-4/5$.  Write $X=-s$ and, after an
orthogonal reflection, $Y=y>0$.  Then
\begin{equation}
 \frac45<s\le1,\qquad
 y^2=P-s^2>\frac{22}{3}>\left(\frac{27}{10}\right)^2.
\label{eq:sy}
\end{equation}

The row-$2$ first projection has the two algebraic possibilities
\begin{equation}
x_2=\frac{-Cs\pm\Delta y}{P}.
\label{eq:x2}
\end{equation}
The minus sign is below $-1$ because
\[
Cs+\Delta y>
4\frac45+\frac{219}{100}\frac{27}{10}
=\frac{9113}{1000}>9\ge P.
\]
If $P_2$ denotes the row-$2$ norm, then
\[
(\Delta y)^2-(Cs)^2=P(\Delta^2-P_2s^2)>0;
\]
for easy modes $P_2\le9<16<\Delta^2$, while for hard modes
$P_2<116/25<(219/100)^2<\Delta^2$ by \eqref{eq:P2-bound}.
All quantities are positive, so $\Delta y>Cs$ and the plus numerator in
\eqref{eq:x2} is positive.  Moreover, solving the same dot-product and
determinant equations for the other coordinate gives
\[
 y_2=\frac{Cy+\Delta s}{P}>0.
\]
The strip-compatible row-$2$ point is therefore $(z,y_2)$ with
$0<z\le1$ and $y_2>0$.  Taking the positive orientation of the determinant
gives
\begin{equation}
\Delta=s y_2+zy.
\label{eq:delta}
\end{equation}

Apply \eqref{eq:common-cap} to $(-s,y)$ and $(z,y_2)$, multiply the two
inequalities by $z$ and $s$, respectively, and divide by $s+z$.
The terms linear in $X$ cancel, while $\lambda\rho sz>0$, so
\begin{equation}
W:=\frac{zy^2+s y_2^2}{s+z}
\le4(1-\lambda\rho sz)<4.
\label{eq:Wupper}
\end{equation}
Weighted Cauchy and \eqref{eq:delta} give
\begin{equation}
W\ge\left(\frac{\Delta}{s+z}\right)^2.
\label{eq:Wlower}
\end{equation}
It remains to prove $\Delta>2(s+z)$.  In an easy mode,
$\Delta>4$ and $s,z\le1$, so the inequality follows.

For a hard mode, \eqref{eq:x2} yields
\[
P(s+z)=(P-C)s+\Delta\sqrt{P-s^2}.
\]
Define
\[
\Phi(s)=2(P-C)s+2\Delta\sqrt{P-s^2}-P\Delta.
\]
The hard-mode bounds give
\[
 \Phi'(s)=2(P-C)-\frac{2\Delta s}{\sqrt{P-s^2}}
 >4-\frac{2\cdot3}{27/10}=4-\frac{20}{9}>0.
\]
Because $s\le1$, this implies $\Phi(s)\le\Phi(1)$.  For
$25/3<P\le9$, define
\[
G(P):=P-2\sqrt{P-1}.
\]
Both sides of
$3P/10+3>2\sqrt{P-1}$ are positive, and
\[
 \left(\frac{3P}{10}+3\right)^2-4(P-1)
 =\frac{(10-P)(130-9P)}{100}>0.
\]
Thus
\begin{equation}
 G(P)>\frac{7P}{10}-3.
 \label{eq:G-lower}
\end{equation}
Put $A_c=7p-30g>0$.  Since $P=\alpha p$ and $\alpha g>1$,
\[
 \frac{7P}{10}-3
 =\frac{\alpha A_c}{10}+3(\alpha g-1)
 >\frac{\alpha A_c}{10}.
\]
Combining this identity with \eqref{eq:G-lower} yields
$G(P)>\alpha A_c/10$.  Proposition~\ref{prop:certified-hard-mode} then gives
\begin{align*}
 \Delta G(P)
 &>\frac{\alpha^2}{10}\sqrt r\,|\delta|A_c\\
 &>2\alpha^2tg
 =2(\alpha g)(\alpha t)>2\alpha t=2(P-C).
\end{align*}
The last strict step uses both $t>0$ and $\alpha g>1$; no sign is inherited
from the enlarged certificate box.  Hence
\[
 \Phi(1)=2(P-C)-\Delta G(P)<0,
\]
and therefore $\Phi(s)<0$.  The identity for $P(s+z)$ shows that
\[
 \Phi(s)=P\{2(s+z)-\Delta\},
\]
so $\Delta>2(s+z)$.  Finally,
\[
 4>W\ge\left(\frac{\Delta}{s+z}\right)^2>4,
\]
a contradiction.  Every one of the eight modes is excluded.
\end{proof}

\section{Four-layer bound and attainment}

The preceding propositions exhaust every possible block of five occupied
levels.  Proposition~\ref{prop:central-block-exclusion} excludes the central
block.  Layer reversal identifies the two side blocks, and
Proposition~\ref{prop:side-eight-modes} and
Proposition~\ref{prop:side-block-exclusion} exclude the normalized side block.  This
gives the uniform upper bound.  The rational construction below supplies the
matching lower bound.

\begin{theorem}[Uniform integer-occupied layer bound]
\label{thm:main}
For every instance satisfying the assumptions in
Section~\ref{sec:contract},
\[
|J_Z(I)|\le4.
\]
\end{theorem}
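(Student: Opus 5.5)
The bound will follow by contradiction, assembling the results already established. Suppose some admissible instance has $|J_Z(I)|\ge5$. Proposition~\ref{prop:five-level-trichotomy} then forces $|J_Z(I)|=5$, and $J_Z(I)$ must be one of the three blocks in \eqref{eq:three-configs}. It therefore suffices to rule out each block separately.

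The central block $\{-2,-1,0,1,2\}$ is excluded directly by Proposition~\ref{prop:central-block-exclusion}.

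For the two side blocks, the first step is the layer reversal $(t,j,e_2,\mu)\mapsto(t,-j,-e_2,-\mu)$. This map preserves the gauge \eqref{eq:gauge} and occupancy, and it sends the block $\{-3,\dots,1\}$ to $\{-1,\dots,3\}$. Reversal can turn a half-open endpoint into a closed one, so I must check that every subsequent argument was proved on the closed relaxation $e_2,\mu\in[-1/2,1/2]$. Section~\ref{sec:eight-modes} states that it was. Consequently both side blocks reduce to the normalized block $\{-1,0,1,2,3\}$, with the contract \eqref{eq:reduced-contract} still valid in its closed form. The same holds for the strict failure \eqref{eq:babai-fail}, since $E_B$ is invariant under $e_2\mapsto -e_2$. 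The cap structure \eqref{eq:common-cap} and the isometry \eqref{eq:isometry} are unaffected as well, because they concern the real body, not the labeling of levels.

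For the normalized side block, Proposition~\ref{prop:side-eight-modes} localizes every hypothetical configuration into one of the eight modes of \eqref{eq:eight-mode-table}. Proposition~\ref{prop:side-block-exclusion}, which relies on the certificate in Proposition~\ref{prop:certified-hard-mode} for the four hard modes, then shows that none of these modes can occur.

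Every five-level configuration is therefore impossible, so $|J_Z(I)|\le4$. The only delicate point, and the step I expect to need most care, is the bookkeeping for the reversal: I must confirm that the normalized side argument uses only closed-box hypotheses (including $\beta\ge\alpha(3/4-\mu^2)$, which is symmetric in $\mu$) and never the half-open tie convention. Once that is confirmed, the theorem is a formal combination of the earlier propositions.
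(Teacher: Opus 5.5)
Your proposal is correct and follows the paper's proof: contradiction via the five-level trichotomy, Proposition~\ref{prop:central-block-exclusion} for the central block, layer reversal to reduce both side blocks to $\{-1,0,1,2,3\}$, and then the eight-mode localization together with the shared-cap exclusion. Your extra bookkeeping, that reversal only requires the closed relaxation $e_2,\mu\in[-1/2,1/2]$ and leaves $E_B$, the gauge, the cap structure and the isometry invariant, is the same point the paper settles at the start of Section~\ref{sec:eight-modes}.
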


\begin{proof}
Suppose instead that $|J_Z(I)|\ge5$.
Proposition~\ref{prop:five-level-trichotomy} forces $J_Z(I)$ to be one of
the three five-level blocks in \eqref{eq:three-configs}.
Proposition~\ref{prop:central-block-exclusion} rules out the central block
$\{-2,-1,0,1,2\}$.  Layer reversal exchanges the two side blocks, so it
suffices to consider $\{-1,0,1,2,3\}$.
Proposition~\ref{prop:side-eight-modes} places every hypothetical occupation
of that block in one of eight explicit row modes, and
Proposition~\ref{prop:side-block-exclusion} excludes every mode.  All three
possibilities lead to a contradiction, proving $|J_Z(I)|\le4$.
\end{proof}

\begin{proposition}[Rational four-layer attainment]
\label{prop:rational-attainment}
There is an admissible rational instance $I_\sharp$ with
\[
 J_Z(I_\sharp)=\{-2,-1,0,1\}.
\]
\end{proposition}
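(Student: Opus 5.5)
The plan is to give an explicit rational instance and verify it by exact rational arithmetic. The simplest choice is to take $q_0=0$ and $K=I$, and to pick $M$, $r_0$, $c_0$, $A_0$, $B_0$ so that the reduction output is easy to control. In practice I will design the reduced data first and read the instance off afterwards. Concretely, I aim for a configuration in which $e_2$ is close to $-1/2$, $\beta$ is only slightly above $4/3$, and the strict failure $E_B>1$ holds with a comfortable margin. Proposition~\ref{prop:five-level-trichotomy} then allows occupation of levels $-2,\dots,1$.

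\textbf{Step 1 (forward computation).} Fix rational $M,r_0,c_0,A_0,B_0$ and carry out the pipeline of Section~\ref{sec:contract} exactly:
\begin{enumerate}[label=(\roman*)]
\item compute $q_c$, $R>0$, $\sigma^2$ and $c_1$;
\item compute the clamp $A<B$ from \eqref{eq:integer-clamp};
\item compute $p_s$, $h_s$, $s_s$, then $q_E$ and $H_E$;
\item compute the pullback $H_z$, then $D$ and $G$.
\end{enumerate}
One difficulty is that $\sigma$ and $s_s$ involve square roots. I will therefore choose $\sigma^2$ to be a rational square, so that $c_1\pm\sigma$ is rational and $\sigma^2h_s^2$ and $s_s^2$ are rational. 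Then $q_E$ and $H_E$ are genuinely rational.

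\textbf{Step 2 (reduction and Babai point).} Run the deterministic $\delta=3/4$ reduction on $G$, with its stated tie conventions, to obtain $U$, $\mu$, $\eta$ and $\xi$. Then compute $N_1$, $N_2$, $e_1$, $e_2$, and check $\alpha e_1^2+\beta e_2^2>1$. To keep this step trivial, I will aim for a $G$ that is already reduced, for instance diagonal or with $|b_G|\le a_G/2$ and $c_G\ge 3a_G/4$. Then $U=I$ and $w=u_2$.

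\textbf{Step 3 (occupation).} By Proposition~\ref{prop:five-level-trichotomy}, every occupied level lies in $\{-3,\dots,3\}$, and \eqref{eq:outer3} bounds $t$ on each level. This leaves a finite list of candidate lattice points. For each candidate I check $F(\mathbf q)\le0$ and $A\le u_1^T\mathbf q\le B$ exactly.

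\begin{itemize}
\item For each $j\in\{-2,-1,0,1\}$, I exhibit a witness $t$ satisfying both conditions.
\item For $j\in\{-3,2,3\}$, I show that every candidate $t$ fails one of the two conditions.
\end{itemize}
Theorem~\ref{thm:main} then guarantees that nothing beyond four levels can occur anyway. Still, the exact set $\{-2,-1,0,1\}$ must be confirmed, so the levels $-3$, $2$ and $3$ have to be excluded explicitly.

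\textbf{Main obstacle.} The hard part is the design, not the verification. Four occupied levels in the failure branch forces $\beta$ near its floor, and the occupied points must lie near the far cap of the disk--slab, outside the inner ellipse but inside the body. I would search backward from the reduced picture:
\begin{enumerate}[label=(\alph*)]
\item Pick $\alpha,\beta,\mu,e_1,e_2$ with $E_B>1$, $\beta\approx 1.4$ and $e_2\approx-0.45$.
\item Place the body so that its long axis, the slab direction, is roughly transverse to the $j$-levels.
\item Choose a thin slab, giving small $h_s$ and hence a long inner ellipse across the levels.
\item Adjust the parameters to rationals until exact checks pass with slack.
\end{enumerate}
If the required $\sigma$ resists being rational, I would alternatively choose the body parameters $(m,h)$ and the lattice $K$ directly, and recover $M$ and $r_0$ from them.
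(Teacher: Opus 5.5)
Your proposal is a search plan rather than a proof. For this statement that is the entire gap: the proposition is purely existential, so its content is an explicit admissible instance together with exact checks. The checks you describe match the paper's: run the pipeline of Section~\ref{sec:contract} exactly, confirm $E_B>1$, then decide each safe level $j\in\{-3,\dots,3\}$ on finitely many candidates. But you never exhibit data on which they succeed, and you leave open whether the search terminates.

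The paper supplies the witness. It takes $q_0=(8,0)$, $K=\begin{pmatrix}6&5\\1&1\end{pmatrix}$, explicit $M,r_0,c_0$ and the slab $0\le q_1\le 8$. This gives $R=1$, $A=0$, $B=8$, $D=16$, $U=\begin{pmatrix}1&-1\\0&1\end{pmatrix}$, $(\alpha,\mu,\beta)=(\tfrac{11}{4},-\tfrac12,\tfrac{11}{8})$, $e_1=e_2=-\tfrac12$ and $E_B=\tfrac{33}{32}$. It then lists the exact minima of $F$ over the finite rows $\lceil (j-8)/6\rceil\le t\le\lfloor j/6\rfloor$; these are negative exactly for $j\in\{-2,-1,0,1\}$.

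Your design heuristics would also not produce a witness as stated.

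\textbf{Target (a) is infeasible.} Since $|e_1|\le\tfrac12$ and $\alpha\le\beta/(\tfrac34-\mu^2)\le2\beta$, one has $E_B\le\beta(\tfrac12+e_2^2)$. This is below $1$ for $\beta\approx1.4$, $|e_2|\approx0.45$. Near $\beta=\tfrac43$ no comfortable margin exists, because $E_B\le\tfrac34\beta$.

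\textbf{Your simplifications exclude that regime.} With $K=I$ and $U=I$ the slab functional $q_1$ equals $N_1+t$. The inner ellipse projects onto exactly $[A,B]$, i.e.\ $(H_E^{-1})_{11}=\sigma^2h_s^2=(B-A)^2/4$. Together these force $4(1/\alpha+\mu^2/\beta)=(B-A)^2$. In the strict-failure branch $\alpha>4-\beta$ and $\mu^2\ge\tfrac34-\beta/\alpha$, so $3/\beta\le4(1/\alpha+\mu^2/\beta)<4/(4-\beta)+1/\beta$. For $\tfrac43<\beta\le\tfrac32$ this quantity lies in $[2,\tfrac{34}{15})$ and is never a perfect square; the paper's data give $\tfrac{24}{11}$. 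So your template cannot reach the regime you aim at, and the belief that four levels force $\beta$ near its floor is unfounded.

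\textbf{Your template does work elsewhere, which would have closed the gap.} Take $q_0=0$, $K=I$, $F(\mathbf q)=\tfrac14(q_1-2)^2+\tfrac{133}{400}(q_2-\tfrac12)^2-1$ and the slab $0\le q_1\le1$. Then $A=0$, $B=1$, $q_E=(\tfrac12,\tfrac12)$, $H_E=\operatorname{diag}(4,\tfrac{76}{25})$, $D=25$ and $U=I$. The reduced data are $\alpha=4$, $\mu=0$, $\beta=\tfrac{76}{25}$, $e_1=e_2=-\tfrac12$, and $E_B=\tfrac{44}{25}>1$. On $q_1=0$ every integer point has $F>0$. On $q_1=1$ one has $F\le0$ exactly for $q_2\in\{-1,0,1,2\}$, with $F=-\tfrac{3}{1600}$ at the two ends. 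Hence $J_Z=\{-2,-1,0,1\}$.

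Incidentally, you do not need $\sigma$ to be rational. $q_E$ and $H_E$ involve only $\sigma^2$, $p_s$ and $(B-A)^2$, and the clamp can be decided exactly.
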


\begin{proof}
Use the data in Appendix~\ref{app:certificate}.  The leading principal minor
and determinant of the quadratic matrix, and the determinant of the affine
lattice matrix, are
\[
 M_{11}=\frac{633}{25600}>0,
 \qquad
 \det M=\frac{9559}{327680000}>0,
 \qquad
 \det K=1.
\]
Thus $M\succ0$ and the affine lattice has full rank.  Exact completion of the
square and integer clamping give
\[
 R=1,\qquad A=0,\qquad B=8.
\]
The pulled-back inner metric and deterministic reduced basis are
\[
 H_z=\begin{pmatrix}11/4&11/8\\11/8&33/16\end{pmatrix},
 \quad D=16,
 \quad U=\begin{pmatrix}1&-1\\0&1\end{pmatrix},
\]
and
\[
 U^T(16H_z)U=\begin{pmatrix}44&-22\\-22&33\end{pmatrix}.
\]
The reduced Gram data therefore satisfy the frozen $\delta=3/4$ conditions
and give
\[
 \alpha=\frac{11}{4},\qquad
 \mu=-\frac12,
 \qquad
 \beta=\frac{11}{8}.
\]
Moreover, $\xi=(-3/4,-1/2)$; the tie-to-larger rule gives
$N_1=N_2=0$ and $e_1=e_2=-1/2$.  Hence
\[
 E_B=\alpha e_1^2+\beta e_2^2=\frac{33}{32}>1,
\]
so this instance lies in the strict-failure branch of the theorem.

In reduced coordinates the affine map is
\[
 \mathbf q=(8+6t-j,t).
\]
For a fixed safe level $j\in\{-3,\ldots,3\}$, the closed slab is equivalent
to the finite integer interval
\[
 \left\lceil\frac{j-8}{6}\right\rceil
 \le t\le
 \left\lfloor\frac j6\right\rfloor.
\]
Substitution into the original quadratic gives the exact row minima
\[
\begin{array}{c|rrrrrrr}
j&-3&-2&-1&0&1&2&3\\ \hline
204800\min F&15919&-553&-6897&-19889&-11129&7759&36775.
\end{array}
\]
The negative entries occur precisely on levels $-2,-1,0,1$; closed
feasibility includes their attaining integer points.  The positive entries
exclude every other safe level.  Thus
$J_Z(I_\sharp)=\{-2,-1,0,1\}$.
\end{proof}

\begin{corollary}[Exact extremum]
\label{cor:sharp-constant}
For the class of admissible instances,
\[
 \boxed{L_Z^*=4}.
\]
\end{corollary}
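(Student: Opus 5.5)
The corollary follows by combining the uniform upper bound of Theorem~\ref{thm:main} with the explicit attainment of Proposition~\ref{prop:rational-attainment}. I will argue the two inequalities $L_Z^*\le4$ and $L_Z^*\ge4$ separately, and then note that the supremum is in fact a maximum.

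For the upper bound, $L_Z^*=\sup_I|J_Z(I)|$ is taken over exactly the admissible instances of Section~\ref{sec:contract}: nondegenerate $R>0$, rank-two case $A<B$, and strict Babai failure \eqref{eq:babai-fail}. Theorem~\ref{thm:main} gives $|J_Z(I)|\le4$ for each such instance, so the supremum of a set of integers bounded by $4$ is at most $4$. Proposition~\ref{prop:five-level-trichotomy} already shows each $J_Z(I)$ is finite, so the cardinalities are genuine integers and no infinite value can occur.

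For the lower bound, I must check that $I_\sharp$ is admissible in the sense required by the supremum, not merely that it occupies four levels. The proof of Proposition~\ref{prop:rational-attainment} records the needed facts:
\begin{itemize}[nosep]
\item the data are rational, with $M\succ0$ (positive leading minor and determinant) and $\det K=1\ne0$;
\item $R=1>0$, so the instance is nondegenerate;
\item the integer clamp gives $A=0<B=8$, so it lies in the rank-two branch;
\item the deterministic reduction and tie-to-larger convention produce $E_B=33/32>1$, so it lies in the strict-failure branch.
\end{itemize}
Hence $I_\sharp$ belongs to the class. Since $|J_Z(I_\sharp)|=|\{-2,-1,0,1\}|=4$, we get $L_Z^*\ge4$, and the supremum is attained.

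The only point needing care, and thus the main obstacle, is this membership check for $I_\sharp$. The reduced Gram data must obey the half-open conventions exactly: $\mu=-1/2$ is allowed, since $-1/2\le\mu<1/2$, and $e_1=e_2=-1/2$ arise from the stated tie rule. The layer reversal and closed relaxations used in the upper-bound proof do not affect this, because they only enlarge the domain on which the exclusion is proved.
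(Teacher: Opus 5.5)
Your proposal is correct and matches the paper's proof: Theorem~\ref{thm:main} gives $L_Z^*\le4$, and the admissible instance $I_\sharp$ of Proposition~\ref{prop:rational-attainment} gives $4=|J_Z(I_\sharp)|\le L_Z^*$. Your explicit admissibility check ($M\succ0$, $\det K=1$, $R=1$, $A=0<B=8$, $E_B=33/32>1$) only spells out what the paper already verifies inside that proposition.
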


\begin{proof}
Theorem~\ref{thm:main} gives $L_Z^*\le4$, whereas
Proposition~\ref{prop:rational-attainment} gives
\[
 4=|J_Z(I_\sharp)|
 \le\sup_I|J_Z(I)|=L_Z^*.
\]
The two inequalities force $L_Z^*=4$.
\end{proof}

\begin{remark}[Three different constants]
The result does not alter the five-level sharp bound for real layer
intersection or the seven-level symmetric safe candidate list.  In the
attainment fixture the three counts are, respectively, four occupied integer
levels, five real-intersection levels, and seven safe candidates.
\end{remark}

\section{Reproducibility of the exact certificate}

The computer-assisted part consists of a finite change of basis from power
coefficients to Bernstein coefficients.  The ancillary program uses exact
rational arithmetic and evaluates all coefficients rather than samples of the
parameter box.  For each of the eight modes it produces degree $(4,6,8,6)$
and $2{,}205$ coefficients.  All coefficients are positive, and the smallest
is $625/2048$.  Appendix~\ref{app:certificate} records the conversion
formula, parameter box, mode table, exact minima, and attainment data needed
to reproduce the calculation.  The proof relies on the rational coefficient
inequalities, not on a digest or an unpublished software result.

\section{Relation to prior work and limitations}

Directional arithmetic width already provides the general counting object
\cite{DeLoeraMarstersONeill2025}.  Theorem~\ref{thm:main} concerns one direction
selected by a reduction algorithm and does not introduce that definition.
The usual planar flatness theorems do not apply directly: the body need not be
hollow, and the selected functional need not minimize lattice width.  Babai's
nearest-plane method and Lenstra-type hyperplane recursion supply the broader
algorithmic setting \cite{Lenstra1983,Babai1986,AgrellEtAl2002}.  The sharp
constant here comes from the interaction between the selected direction, the
shared disk--slab cap, and strict failure of the chosen Babai point.

The theorem assumes a rational planar
positive-definite quadratic body, a full-rank affine lattice coset, a closed
slab, and the stated deterministic reduction.  The theorem does not address
higher dimension, floating-point implementations, or arbitrary convex bodies.

\section*{Author contributions}

Xinyue Liu conceived the problem, developed the proof, implemented the exact
certificate computation, verified the resulting identities, and wrote the
manuscript.

\section*{Competing interests}

The author declares no competing interests.

\appendix
\input{appendix_exact_certificate}

\bibliographystyle{plain}
\bibliography{references}

\end{document}

%% file: appendix_exact_certificate.tex
\section{Exact Bernstein certificate and extremal example}
\label{app:certificate}

The side-case proof uses one finite symbolic calculation.  The calculation
converts a polynomial with rational coefficients to the Bernstein basis; it
does not use floating-point optimization or sampling.

\subsection{Mode data}

For a side configuration normalized to rows $\{-1,0,1,2,3\}$, write
$e=|e_1|$, $h=|\mu|$, $\vartheta=e_2$, and $r=\beta/\alpha$.  The proof gives
\[
 e>\frac{47}{100},\qquad h>\frac{11}{25},\qquad
 \vartheta>\frac{23}{50},
\]
and
\[
 \frac{64}{25}<\alpha\le\frac{72}{25},\qquad
 \frac43<\beta\le\frac{36}{25},\qquad
 \frac12\le r<\frac9{16}.
\]
The two sign families and their complete integer cells are
\[
\begin{array}{c|c|c}
 \text{family} & t_3 & t_2\\ \hline
 \mu=+h & -1 & -2,-1,0,1\\
 \mu=-h &  2 & 0,1,2,3.
\end{array}
\]
For each of the eight modes set
\[
\begin{aligned}
a_3&=t_3-e+3\mu,& b_3&=3-\vartheta,\\
a_2&=t_2-e+2\mu,& b_2&=2-\vartheta,\\
d&=t_3-t_2+\mu,\\
p&=a_3^2+r b_3^2,&
t&=a_3d+r b_3,\\
g&=e^2+r\vartheta^2,&
\delta&=a_3b_2-a_2b_3.
\end{aligned}
\]
The actual-domain sign checks are
\[
t>0,\qquad 7p-30g>0,\qquad r>0,\qquad g>0.
\]

\subsection{Certified polynomial}

The hard modes reduce to
\[
 \sqrt r\,|\delta|(7p-30g)>20tg.
\]
Because all factors have the signs displayed above, it is enough to prove
strict positivity of
\[
 \mathcal B=r\delta^2(7p-30g)^2-400t^2g^2.
\]
Introduce nonnegative box coordinates by
\[
\begin{aligned}
r&=\frac34-h^2+R,&0&\le R\le\frac1{16},\\
e&=\frac12-E,&0&\le E\le\frac3{100},\\
h&=\frac12-H,&0&\le H\le\frac3{50},\\
\vartheta&=\frac12-Q,&0&\le Q\le\frac1{25}.
\end{aligned}
\]
This closed rational box contains the true parameter domain.  With
\[
(R,E,H,Q)=\left(\frac{x_0}{16},\frac{3x_1}{100},
\frac{3x_2}{50},\frac{x_3}{25}\right),
\]
the power polynomial has tensor degree $d=(4,6,8,6)$.  If $a_\nu$ denotes
its power coefficient, the degree-$d$ Bernstein coefficient is
\[
b_k=\sum_{\nu\le k}a_\nu
\prod_{i=0}^3\frac{\binom{k_i}{\nu_i}}{\binom{d_i}{\nu_i}}.
\]
Every factor in this identity is an integer or a rational number.  The
Bernstein basis is nonnegative and forms a partition of unity on the unit
cube, so $\mathcal B\ge\min_k b_k$.

The exact results are
\[
\begin{array}{c|r|r|r}
\text{family}&t_2&\#\{b_k\}&\min b_k\\ \hline
\mu=+h&-2&2205&1445625/2048\\
\mu=+h&-1&2205&625/2048\\
\mu=+h& 0&2205&625/2048\\
\mu=+h& 1&2205&1445625/2048\\
\mu=-h& 0&2205&1445625/2048\\
\mu=-h& 1&2205&625/2048\\
\mu=-h& 2&2205&625/2048\\
\mu=-h& 3&2205&1445625/2048.
\end{array}
\]
Thus all $8\cdot2205=17640$ coefficients are strictly positive and
\[
\mathcal B\ge\frac{625}{2048}>0.
\]
The ancillary program \texttt{exact\_bernstein\_certificate.py} evaluates
every coefficient in this table using Python's exact rational arithmetic.  Its
canonical result digest is
\begin{center}
\small\texttt{654683d789a12c802315c5ecbb33b1cc4ef581175875e18f329b89e0ce3656dd}.
\end{center}
This digest covers the ordered mode records, coefficient counts, minima, and
per-mode coefficient digests; it is not used as a substitute for the rational
coefficient checks themselves.

\subsection{Four-layer attainment fixture}

For the attaining instance, define the original feasible set by
\begin{equation}
 F(\mathbf q)=\mathbf q^TM\mathbf q+2r_0^T\mathbf q+c_0\le0,
 \qquad \mathbf q=q_0+Kz,\qquad z\in\mathbb Z^2,
 \label{eq:fixture-original}
\end{equation}
with the data
\[
q_0=(8,0),\qquad
K=\begin{pmatrix}6&5\\1&1\end{pmatrix},\qquad
M=\begin{pmatrix}
633/25600&-869/6400\\
-869/6400&9559/12800
\end{pmatrix},
\]
\[
r_0=\begin{pmatrix}-1059/5120\\11297/10240\end{pmatrix},
\qquad c_0=13351/8192,
\]
and the closed slab $0\le q_1\le8$.  Exact arithmetic gives
\[
\det K=1,\qquad M\succ0,\qquad
q_c=-M^{-1}r_0=(160,1215/44),\qquad
r_0^TM^{-1}r_0-c_0=1.
\]
The rational inner-ellipse centre in the original plane and its pullback are
\[
 q_E=(4,-3/4),\qquad z_E=K^{-1}(q_E-q_0)=(-1/4,-1/2).
\]
The pulled-back metric, its canonical denominator, and the deterministic
reduced basis are
\[
H_z=\begin{pmatrix}11/4&11/8\\11/8&33/16\end{pmatrix},\qquad D=16,
\]
\[
U=\begin{pmatrix}1&-1\\0&1\end{pmatrix},
\qquad
U^T(16H_z)U=\begin{pmatrix}44&-22\\-22&33\end{pmatrix}.
\]
Thus
\[
 \alpha=\frac{11}{4},\qquad \mu=-\frac12,\qquad
 \beta=\frac{11}{8},\qquad
 \xi=U^{-1}z_E=(-3/4,-1/2).
\]
The tie-to-larger rule gives $N_1=N_2=0$ and
$e_1=e_2=-1/2$, so the deterministic Babai energy is
\[
 E_B=\alpha e_1^2+\beta e_2^2=\frac{33}{32}>1.
\]

Because the Babai indices vanish, $n=(t,j)^T=U^{-1}z$.  Moreover
\[
 KU=\begin{pmatrix}6&-1\\1&0\end{pmatrix},\qquad
 \mathbf q=q_0+KU\binom tj=(8+6t-j,t).
\]
Five explicit feasible points occupy four distinct levels:
\[
\begin{array}{c|r|c|c|c}
j&t&z=U(t,j)^T&\mathbf q&F(\mathbf q)\\ \hline
-2&-1&(1,-2)&(4,-1)&-553/204800\\
-1&-1&(0,-1)&(3,-1)&-6897/204800\\
 0&-1&(-1,0)&(2,-1)&-3113/204800\\
 0& 0&(0,0)&(8,0)&-19889/204800\\
 1& 0&(-1,1)&(7,0)&-11129/204800.
\end{array}
\]
For a fixed safe level $j\in\{-3,\ldots,3\}$, the closed slab is exactly
\[
 \left\lceil\frac{j-8}{6}\right\rceil
 \le t\le
 \left\lfloor\frac j6\right\rfloor.
\]
Evaluating \eqref{eq:fixture-original} on these finite intervals gives
\[
\begin{array}{c|rrrrrrr}
j&-3&-2&-1&0&1&2&3\\ \hline
204800\min F&15919&-553&-6897&-19889&-11129&7759&36775.
\end{array}
\]
Consequently the complete occupied set is
\[
J_Z=\{-2,-1,0,1\}.
\]
For comparison, the real nonintegral reduced point $n=(1/3,2)$ maps to
$z=(-5/3,2)$ and $\mathbf q=(8,1/3)$, where
\[
 F(\mathbf q)=-\frac{5201}{1843200}<0.
\]
Its normalized full-ellipse value is
\[
 \frac{1837999}{1843200}<1.
\]
This last fact separates the integer occupation count from real layer
intersection: the same fixture has five real-intersection layers but only
four integer-occupied layers.

%% file: references.bib
@article{DeLoeraMarstersONeill2025,
  author = {De Loera, Jes{\'u}s A. and Marsters, Brittney and O'Neill, Christopher},
  title = {An arithmetic measure of width for convex bodies},
  journal = {arXiv preprint arXiv:2509.04726},
  year = {2025},
  url = {https://arxiv.org/abs/2509.04726}
}

@article{DezaOnn1995,
  author = {Deza, Michel and Onn, Shmuel},
  title = {Lattice-free polytopes and their diameter},
  journal = {Discrete \& Computational Geometry},
  volume = {13},
  number = {1},
  pages = {59--75},
  year = {1995},
  doi = {10.1007/BF02574028}
}

@article{Hurkens1990,
  author = {Hurkens, C. A. J.},
  title = {Blowing up convex sets in the plane},
  journal = {Linear Algebra and its Applications},
  volume = {134},
  pages = {121--128},
  year = {1990},
  doi = {10.1016/0024-3795(90)90010-A}
}

@misc{AverkovCodenottiFreyerHuang2026,
  author = {Averkov, Gennadiy and Codenotti, Giulia and Freyer, Ansgar and Huang, Kyle},
  title = {Exact Flatness Constant for One-Point Convex Bodies and the Discrete Isominwidth Problem: The Planar Case},
  year = {2026},
  eprint = {2604.27260},
  archivePrefix = {arXiv},
  primaryClass = {math.MG},
  note = {arXiv:2604.27260v2},
  url = {https://arxiv.org/abs/2604.27260}
}

@misc{CodenottiHallHofscheier2021,
  author = {Codenotti, Gabriele and Hall, Thomas and Hofscheier, Johannes},
  title = {Generalised Flatness Constants: A Framework Applied in Dimension 2},
  year = {2021},
  eprint = {2110.02770},
  archivePrefix = {arXiv},
  primaryClass = {math.MG},
  note = {arXiv:2110.02770},
  url = {https://arxiv.org/abs/2110.02770}
}

@article{Babai1986,
  author = {Babai, L{\'a}szl{\'o}},
  title = {On {Lov{\'a}sz}' lattice reduction and the nearest lattice point problem},
  journal = {Combinatorica},
  volume = {6},
  number = {1},
  pages = {1--13},
  year = {1986},
  doi = {10.1007/BF02579403}
}

@article{Lenstra1983,
  author = {Lenstra, Jr., Hendrik W.},
  title = {Integer programming with a fixed number of variables},
  journal = {Mathematics of Operations Research},
  volume = {8},
  number = {4},
  pages = {538--548},
  year = {1983},
  doi = {10.1287/moor.8.4.538}
}

@article{FinckePohst1985,
  author = {Fincke, Ulrich and Pohst, Michael},
  title = {Improved methods for calculating vectors of short length in a lattice, including a complexity analysis},
  journal = {Mathematics of Computation},
  volume = {44},
  pages = {463--471},
  year = {1985},
  doi = {10.1090/S0025-5718-1985-0777278-8}
}

@article{SchnorrEuchner1994,
  author = {Schnorr, Claus-Peter and Euchner, Martin},
  title = {Lattice basis reduction: Improved practical algorithms and solving subset sum problems},
  journal = {Mathematical Programming},
  volume = {66},
  pages = {181--199},
  year = {1994},
  doi = {10.1007/BF01581144}
}

@article{AgrellEtAl2002,
  author = {Agrell, Erik and Eriksson, Thomas and Vardy, Alexander and Zeger, Kenneth},
  title = {Closest point search in lattices},
  journal = {IEEE Transactions on Information Theory},
  volume = {48},
  number = {8},
  pages = {2201--2214},
  year = {2002},
  doi = {10.1109/TIT.2002.800499}
}

@article{KannanLovasz1988,
  author = {Kannan, Ravi and Lov{\'a}sz, L{\'a}szl{\'o}},
  title = {Covering minima and lattice-point-free convex bodies},
  journal = {Annals of Mathematics},
  volume = {128},
  number = {3},
  pages = {577--602},
  year = {1988},
  doi = {10.2307/1971436}
}
